\numberwithin{equation}{section}
\theoremstyle{plain}
\newtheorem{theorem}{Theorem}[section]
\newtheorem{corollary}{Corollary}[section] 
\theoremstyle{definition}
\newtheorem{definition}[theorem]{Definition}
\def\R{\mathbb{R}}
\begin{document}

\begin{frontmatter}
\title{
THERMODYNAMIC FORMALISM \\ 
OF STOCHASTIC \\ 
EQUILIBRIUM ECONOMICS
}

\runtitle{THERMODYNAMICS OF STOCHASTIC EQUILIBRIUM ECONOMICS}

\begin{aug}
\author[A]{\fnms{Esa}~\snm{Nummelin}\ead[label=e1]{leena.nummelin@gmail.com}},
\and
\author[B]{\fnms{Elja}~\snm{Arjas}\ead[label=e2]{elja.arjas@helsinki.fi}}

\address[A]{Department of Mathematics and Statistics, University of Helsinki\printead[presep={,\ }]{e1}}

\address[B]{Department of Mathematics and Statistics, University of Helsinki
\printead[presep={,\ }]{e2}}
\end{aug}

\begin{abstract}
In economics, construction of perfect models in a way that would be comparable to the standards customary in physical sciences is generally not feasible. In particular, the observed value for an economic equilibrium may deviate
significantly from its model-based a priori expected value. Mathematically, the a posteriori   observed equilibrium
may then represent a large deviation in the sense that it falls outside the region
of validity of the Central Limit Theorem.
With this as the motivating starting point, we propose a  new approach to the theory of stochastic
economic equilibrium. Drawing on recent developments in probability
theory, we argue for the relevance of the  theory of  large
deviations in stochastic equilibrium economics.
Thereby the formalism of stochastic
equilibrium economics becomes analogous to that of classical
statistical mechanics, as the 
theory of large deviations forms also the mathematical basis of
statistical mechanics. In consequence, thermodynamic concepts such as entropy, partition function and canonical probability can be introduced in a natural way to stochastic equilibrium economics. 
We focus here on the economic analogs of two fundamental principles, the  Second Law of Thermodynamics and the Gibbs Conditioning Principle.
\end{abstract}

\begin{keyword}[class=MSC]
\kwd[Primary ] 
{60F10, 82Bxx, 91Bxx}
\end{keyword}

\begin{keyword}
\kwd{economic equilibrium}
\kwd{large deviation theory} 
\kwd{statistical mechanics}
\kwd{entropy} 
\end{keyword}

\end{frontmatter}


\section{Introduction}

\label{sec1}

\subsection{The set-up}

\label{sub11}

The ideal in modern theoretical economics  is to arrive at a correspondence between the considered models and reality that is present, and  
paradigmatically characteristic, to   physical sciences.
 
Classical  equilibrium theory is concerned with the following
fundamental problem:

\medskip

\textsc{Problem 0}. Does there exist an equilibrium, i.e., a price vector at
which the total excess demand in the economy vanishes?

\medskip

In the classical  theory the total excess demand is  a known deterministic function (e.g., Debreu \cite{deb}), and 
equilibrium prices are defined as zeros of this function. In reality, excess demand is not known, but it is still implicitly thought that the  equilibrium
prices are zeros of an hypothetical total excess demand function.

Following the pioneering works by Arrow, Debreu and McKenzie in the 1950s
(\cite{arr}, \cite{arrdeb}, \cite{deb}, \cite{mck}), the theory of
general (deterministic) equilibrium became a major research topic, and was then
developed further during the 1960s and 1970s, to reach its present mature state.

As all human behaviour, economic behaviour is to some extent unpredictable, and often irrational. In contrast to physical sciences, in economics it is never possible
to achieve perfect correspondence between model and reality. An economic model, however sophistically built, is
only an approximate representation of the real world, with the discrepancies between model and reality corresponding to a priori uncertainty.


\if 0 Due to the imperfectness,
the  observations on the real  economy may well contradict
with the predictions by the apriori model, with these   contradictions  indicate  the  presence of   a priori
uncertainty. \fi

Accounting  for uncertainty leads to consideration of an entire ensemble of
configurations, where each configuration is combined with a weight (a priori
probability) indicating the  a priori degree of belief in its truth.
These  beliefs can be   "rational guesses"  or be based, e.g., on an earlier systematic statistical study of the microeconomic behaviour of a suitably chosen representative
sample drawn from the set of economic agents. These beliefs
can be described  mathematically by specifying an a priori probability law.

In view of this, it is natural to consider equilibrium
models, which are  stochastic  rather than deterministic. For this, we assume that the  parameters in the equilibrium
model are  random variables, and thereby also 
the equilibria of the model are  random. 

We also assume that the economy is {\it large}, i.e., the number of participating agents is large. The question of the  existence of an equilibrium, with probability $1$, in random economies was famously studied in Hildenbrand \cite{hil}.

The first special problem one can pose
for a large random economy is the following:
\medskip

\textsc{Problem 1.} What is the a priori relationship of  a random equilibrium  with
its model-based a priori expected value? More precisely,  does there exist a Law of Large Numbers (LLN) and a Central Limit Theorem (CLT)?
\medskip

The seminal work dealing with the solution to Problem 1 is due to Bhattacharya and Majumdar 
\cite{bhamaj1}. 
In contrast to deterministic equilibria, after
the pioneering  papers by Hildenbrand, Bhattacharya and Majumdar in the 
early 1970s,  interest in the  theory of random economic equilibria has remained  marginal. Drawing on recent developments in general probability theory, we suggest here a new  approach to this
theory, by arguing for the important role of the
theory of large deviations in this context.

We formulate two basic problems concerning the Bayesian type of
interplay between an  a priori random equilibrium model with the
a posteriori observations on the realized equilibrium.

By an {\it a posteriori (macroeconomic)   observation} we mean  the
observation of some   realized macroeconomic variable or quantity such as,
e.g, the equilibrium prices, the consumption or production of
some commodities in the  economy or in  some economic sector, or the
number of economic agents being of a certain "type". We focus on the case where  the observations  concern the  
realized   equilibrium prices in the system.

The first new problem to be addressed  is as follows:

\medskip
\textsc{Problem 2.} Suppose that the realized equilibrium
is a posteriori observed.
What is the  a priori probability of this observation?

\medskip
Note that, if the a priori model is "good", then, due to the LLN and
CLT, this probability ought to be "high". Moreover, in this case its
value can be approximated, due to CLT, see  Section \ref{sub25}.

However, since economic models are not perfect, the a posteriori observed equilibrium may deviate
significantly from its model-based a priori expected value, in which case the a priori probability of the observed value is  small.  In fact, it can be argued that the observed equilibrium then represents a  Large Deviation (LD) in the sense that it falls
outside the region of the validity of the CLT. This is so because
the  CLT  is valid only in an asymptotically small region, with standard deviation of
order ${1 \over \sqrt{n}}$, cf.
Section \ref{sub25}.

We then address the  inference problem arising from the a posteriori
observation of the equilibrium (possibly representing a large
deviation):

\medskip
\textsc{Problem 3.} What is the  a posteriori probability law governing a
random economy, conditionally on an 
observation of the realized  equilibrium?

\medskip
This inference problem arises also in the classical non-stochastic
equilibrium theory. Namely, due to the (unavoidable)
contradictions of   the a posteriori observations on the real system
 with their model-based predictions, some degree of re-modelling becomes necessary.
 In the deterministic case, particular versions of such models remain necessarily ad hoc.

 \medskip

 \textsc{Remark.} These ideas have certain similarity, but also obvious differences, to those familiar from the Bayesian statistical inference and decision theory. There, a postulated prior distribution of the model parameters is updated, based on the empirical observations (data) and by applying Bayes' formula, into the corresponding  posterior distribution. In each case,   predictive probabilities of future observables can be computed by an integration of the corresponding likelihood with respect to  the distribution of the model parameters. 

\subsection{Short description of the results}

\label{sub12}
Problems 2 and 3 have close  analogs in the
formalism of  classical statistical mechanics. This is because the theory of large deviations can be seen to have a similar role in statistical mechanics as in stochastic equilibrium economics.

\medskip
The set-up of Problem 2 turns out to be an analog of the Second Law
of thermodynamics. To this end, recall that the (integral form of the)
classical Second Law relates thermodynamic entropy with temperature,
internal energy, and thermodynamic partition function, see
(\ref{equ216}). 

We argue that in stochastic equilibrium economics   {\it information content}, defined as the logarithm of the inverse probability of the observation
of an equilibrium, should be viewed as the   analog of thermodynamic entropy in statistical mechanics. The Second Law of
stochastic equilibrium economics then relates economic entropy to the corresponding
economic partition function.

The economic Second Law is obtained at the ideal limit of an infinitely
large economy, the analog of the thermodynamic limit, from a Theorem of Large Deviations (TLD) concerning
the random equilibrium prices. The TLD gives a formula for the a priori probability of the
observation of an a priori unexpected equilibrium. 
The economic Second Law can then be interpreted as providing an information theoretic  measure of goodness for the a priori equilibrium
model.

\medskip
The set-up of Problem 3 is an analog of the thermodynamic Gibbs
Conditioning Principle (GCP) characterizing  the  thermodynamic
canonical probability law as the governing probability law of a
thermodynamic system at a measured temperature. The canonical law is a
member of the exponential probability distribution family
generated by the total energy, see Section \ref{sub31}.

According to the  economic analog of the Gibbs Conditioning Principle,
conditionally on the observation of an equilibrium, the a posteriori
probability law governing a random economy is a canonical member of
the exponential family  generated by the random total excess demand
(Section \ref{sub32}). GCP is a version of a  conditional Law of
Large Numbers concerning  macroeconomic random variables (Theorem \ref{Theorem 3.}
in Section \ref{sub52}). \medskip

We organize the paper into two parts. In the first part we develop the
thermodynamic formalism of large random economies, focusing on the
economic analogs of the Second Law of thermodynamics and the Gibbs
Conditioning Principle. In the second part  we formulate the economic Second Law and the Gibbs Conditioning
Principle as mathematical theorems within the framework of the
theory of LD.

As illustrations, we investigate  in  detail  the special case of an {\it  ideal
random economy}, where  the economic agents are supposed to be
statistically identical and independent, being an analog of the
classical thermodynamic ideal gas. As a special illustrating
example we study   "ideal" random Cobb-Douglas economies.

There do not seem to be no easily accessible treatments of the LD theoretic
foundations of statistical mechanics in the literature. Therefore, for the convenience of
a possibly unacquainted reader and in order to point out the proposed
analogy of stochastic equilibrium economics with statistical mechanics,
we 
begin each chapter with a short review on the relevant basics of
statistical mechanics and its LD theoretic foundations.

We have made an attempt to keep the presentation self-contained, thereby
avoiding references to the general LD theory.  For a reader
interested in this theory we recommend the monographs by
Bucklew \cite{buc} and by Dembo and Zeitouni \cite{demzei}. 

\if 0
We plan to study  {\it stochastic finance markets} and the so-called
{\it survival model} as applications of the proposed formalism in the
forthcoming papers \cite{num5}, \cite{num6}.\fi

\bigskip


\begin{Large}

\noindent PART I: THERMODYNAMIC FORMALISM

\end{Large}

\section{The Second Law}
\label{sec2}

\subsection{The Second Law of thermodynamics}
\label{sub21}

The proposed formalism for stochastic equilibrium theory  is analogous
to that of statistical mechanics.
Therefore, in order to point out 
this analogy,
we review in Sections 2.1 and 3.1 the classical Second Law of thermodynamics and the Gibbs Conditioning Principle.
 
Consider a physical system which comprises  $n$ {\it particles} $i=1,2,...,n$.
Their  positions $\rho_i \in \R^3$ and momenta $\theta_i \in \R^3$  
form a {\it particle configuration}
$\omega$ in the {\it thermodynamic
ensemble} $\Omega \ \dot=\ \R^{6n}$. Associated with each
particle configuration $\omega$ there is the {\it energy}
$U(\omega) \ (\dot= $ the sum of the kinetic energies of the particles and of
the potential energy associated with the particle configuration
$\omega$).

According to Liouville's theorem,  Lebesgue measure $d\omega$ (= the Euclidean volume)
in $\R^{6n}$ is invariant under the Hamiltonian dynamics (see
\cite{mar1}: Chapter 1). This
means that  Lebesgue measure can be regarded as the "a priori
probability law".
In statistical mechanics the a priori model is "completely
imperfect" in that,
a priori, all configurations $\omega$ are equiprobable.
 
\medskip

Observation (measurement) of the {\it temperature}  $T$  restricts the
thermodynamic system to a compact 
{\it  energy shell}:
\begin{equation}
\label{equ210}
  \{  |U-E| < \Delta\}\ \dot=\ \{\omega \in \Omega:\  |U(\omega)-E| < \Delta\}.
\end{equation}
Here $E $ denotes the internal energy at temperature $T$, and
$\Delta$ denotes the thickness of the
"infinitesimally thin" energy shell. (The symbol "dot" indicates equality by definition.)

The {\it thermodynamic entropy} $S$  is defined
as the logarithm of the volume of this  energy shell:
\begin{equation}
\label{equ211}
S\ \dot=\ \log \hbox{Vol} \{ |U-E| < \Delta\},
\end{equation} 
(\cite{pliber}: p.32).

The {\it thermodynamic partition function} $\Lambda(\beta)$ is defined as the
Laplace transform of the  energy:
\begin{equation}
\label{equ212}
\Lambda(\beta)\ \dot=\ \int e^{-\beta U(\omega)}d\omega,\ \beta >
0,\end{equation}
where the  conjugate variable
$\beta$ has the meaning of {\it inverse 
temperature}: $T \ \dot=\ {1 \over
\beta}$. 
The  {\it internal energy} $E(\beta)$ associated with the inverse
temperature $\beta$ is defined as the derivative
\begin{equation}
\label{equ213}
E(\beta)\ \dot=\ -{d \over
d\beta} \log \Lambda(\beta).
\end{equation}


In physics, this quantity is traditionally denoted by $\langle U(\beta) \rangle$. 
\medskip

The Second Law of thermodynamics introduces  entropy as an extensive thermodynamic variable. According to this law, an infinitesimal reversible addition $dQ$ of heat leads to a proportional increase in entropy, with the inverse temperature as the coefficient of proportionality: 
\begin{equation}
\label{equ214}
dS=\beta dQ.
\end{equation}
For a system in constant volume (doing no work), added heat adds influences solely the internal energy of the system. Therefore, for a system of constant volume we have
\begin{equation}
\label{equ215}
dS=\beta dE.
\end{equation}
              
Integrating by parts in (\ref{equ215})
 and taking into account  
(\ref{equ213}) leads to the equivalent 
integral form of the second law:
\begin{eqnarray}
\label{equ216}
S(\beta)& = & \beta E(\beta)-\int E(\beta) d\beta \nonumber \\ 
& = & \beta E(\beta)+ \log \Lambda(\beta). 
\end{eqnarray}
The Second Law is obtained at the ideal limit
$n = \infty$ (the so-called {\it thermodynamic limit})
from a theorem of large deviations concerning
the total energy $U$, see Section \ref{sub41}.
\bigskip

\textsc{ Example: The classical ideal gas.}
In the classical ideal gas there is no interaction between the
particles so that the energy comprises  solely the kinetic energies of the
individual particles, see \cite{mar1}: Section 2.1. Therefore, it is sufficient to include in the ensemble only  the momenta of the particles: $$\Omega \ \dot=\ \{\omega  =
(\theta_1,...,\theta_n):\ \theta_i \in \R^3\}=R^{3n}.$$
The kinetic energy $u_i$  of particle $i$  is
given by the "structure function"
$$u_i = u (\theta_i) ={|\theta_i|^2 \over 2m} $$ of its momentum $\theta_i$
("thermodynamic characteristic") and  
mass $m$. (The non-standard 
terms "structure function" and "thermodynamic characteristics"
refer to their economic analogs, cf. Section \ref{sub24}.)
Thus the total energy
becomes simply the sum
\begin{equation}
\label{equ217}
U(\omega) =  \sum\limits^n_{i=1}u(\theta_i) = \sum\limits^n_{i=1}
{|\theta_i|^2 \over 2m} .\end{equation}

Here, and in numerous places elsewhere, we omit   the dependence   on the system size $n$ in the notation.

Consequently, the partition function of the
ideal gas is the $n$th power
of the partition function associated with a single particle:
\begin{equation}\label{equ217a}
\Lambda(\beta) \dot= \int \cdots \int e^{- 
\beta\sum\limits^n_{i=1}
{|\theta_i|^2 \over 2m}} d\theta_1 \cdots d\theta_n = \lambda(\beta)^n,\end{equation}
where
\begin{equation}
\label{equ218}
\lambda(\beta)\ \dot=\ \int\limits_{R^3} e^{-\beta u(\theta)}d\theta =
\int\limits_{R^3} e^{-{\beta |\theta|^2 \over 2m}}d\theta  =
(\int\limits_R e^{-{\beta x^2 \over 2m}}  dx)^3 = (2 \pi m)^{{3 \over
2}}\beta^{-{3 \over 2}}.\end{equation}

The internal energy of the ideal gas is given by
\begin{equation}
\label{equ218a}
E(\beta)= n e(\beta),\end{equation}
 where
\begin{equation}
\label{equ219}
e(\beta)\ \dot=\ -{d \over d\beta} \log \lambda(\beta)  = {3 \over 2\beta}
\end{equation}
denotes the internal energy of a single particle. Therefore, for the ideal gas,
 the integral form (\ref{equ216}) of the Second Law   becomes
\begin{equation}
\label{equ2110}
S(\beta) = n( \log \lambda(\beta)+\beta e(\beta))  =
 {3n \over 2}(- \log \beta + \log 2 \pi em).
\end{equation}

\subsection{Random economies and their equilibria}
\label{sub22}
\medskip

We consider 
an {\it   economic system} (shortly, {\it economy}), where
certain  {\it  commodities} $j=1,...,l+1$
are traded  by a set of {\it economic agents.}

We assume that there is  a parameter $n$, to be  called  the
{\it size parameter}. (Typically,  $n$ is simply  the  number of
economic agents.) We assume that we are dealing with a "large
economy"; namely, in the exact 
theorems we let $n\to \infty$.

Let $Z^j(p)$ denote the {\it total excess demand} on  commodity $j \in \{1,...,l+1\}$ at  {\it price} $p \in \R^{l+1}_+$. (Superscripts refer to  commodities.) We assume that, for each $j$ and $p$,  the total excess demand is a  random variable defined on an underlying  probability space $(\Omega,{\cal F},P)$, i.e.,  $$Z^j(p)=\{Z^j(\omega;p);\ \omega \in \Omega \},$$ where $Z^j(\omega;p)$ is a measurable map of the variable  $\omega$, see e.g. \cite{bil}: p. 182.

We  refer to $\Omega$ as the {\it macroeconomic ensemble} and to its
elements $\omega$  as the {\it macroeconomic configurations}. (This somewhat peculiar terminology is
due to  the analogy with statistical mechanics, cf. Section \ref{sub21}.)
The underlying probability measure $P$ is called the {\it
a priori
macroeconomic probability law}. 

We  make the following two standard assumptions:

\medskip
\noindent (i) $ Z^j(ap) \equiv Z^j(p)$ for every  constant
$a>0\ $ (homogeneity of degree 0); and

\medskip
\noindent (ii) $\sum\limits^{l+1}_{j=1}p^j Z^j(p) \equiv 0\ $  (Walras' law).

\noindent  As is common,  the symbol $\omega$ is omitted.
Due to the homogeneity of degree 0,  the prices can without loss of
generality be normed to belong to  the  {\it price simplex}
$$S^l \ \dot=\ \{p \in \R^{l+1}_+:\ \sum\limits^{l+1}_{j=1}p^j = 1\}.$$

Walras' law implies  that, for any price $p \in S^{l}$ such that $p^{l+1} >
0$, the total excess demand on the $(l+1)$st commodity is determined by
the total excess demands on the other commodities:
$$Z^{l+1}(p) = -(p^{l+1})^{-1} \sum\limits^l_{j=1}p^jZ^j(p).$$
Thus we  omit the  $(l+1)$st component and call the vector
 $$Z(p)\ \dot=\ (Z^1(p),...,Z^l(p)) \in \R^l$$  comprising
the total excess demands on the  commodities $j= 1,...,l$
simply the {\it (random) total excess demand}.

\medskip
The {\it random equilibrium  prices (r.e.p.'s)}  are defined as those price vectors $p^*=\{p^*(\omega);\ \omega \in \Omega \}$ at which the random total excess demand vanishes: \begin{equation} \label{equ220} Z(\omega;p^*(\omega)) = 0,\end{equation} or, shortly: $$Z(p^*) = 0.$$

Let $$EZ(p)\ \dot=\ \int Z(\omega;p)P(d\omega)$$
denote the (deterministic) {\it expected total excess demand function.}
Its zeros $p^*_e$ are called  the {\it (a priori) expected
equilibrium prices}:
$$EZ(p^*_e)= 0.$$
\medskip
\textsc{Remarks:}
(i) The random equilibrium prices form a  random set:
$$\pi^*= \{\pi^*(\omega); \omega \in \Omega\},$$
where 
$$\pi^*(\omega)= \{p \in S^l:\ Z(\omega;p)=0\}$$
denotes the set of equilibrium prices at the realized macroeconomic configuration $\omega$.
Thus
$p^*(\omega)$
denotes an arbitrary element of 
$\pi^*(\omega)$.

\medskip
(ii)
There is  a more general concept of random equilibrium: 
A  random variable
$X(p)= \{X(\omega;p);\ \omega \in \Omega\} \in \R^d$ (for some $d \geq 1$),
which depends on the macroeconomic configuration $\omega$ and on the price $p$,  is called a {\it macroeconomic random variable}. Examples of such variables are total demand, supply, production or share of these by the whole economy or by some macroeconomic sector. Also, if the economic agents can be classified into a finite set of different {\it types,} then the  numbers of agents belonging to these classes can be regarded as macroeconomic random variables, cf. Example (iii) below.
\medskip

For any r.e.p. $p^*$, let $$x^* \ \dot=\ n^{-1}X(p^*)$$ 

denote the  mean  of the macroeconomic random variable at this equilibrium price. 
\medskip



The  pair $(p^*,x^*)$  is called a {\it random composite equilibrium (r.c.e.)}, see \cite{num4}: Section 4.3.

\medskip
\textsc{ Examples: }
(i) In the standard Cobb-Douglas exchange economy (shortly, 
CD economy) comprising $n$
economic agents $i=1,...,n,$ the {\it individual excess demand} by agent $i$ is
given by the vector
\begin{equation}
\label{equ221}
 \zeta_i(p)\ \dot=\ \bigl( \ {a_i^j \over p^j} p \cdot e_i
- e_i^j;\ j=1,...,l \bigr),\end{equation} where
$a_i\ \dot=\ (a^1_i,...,a^{l+1}_i) \in S^l$ is the
{\it share parameter},
$e_i\ \dot=\ (e^1_i,...,e^{l+1}_i) \in \R^{l+1}$ is the  {\it initial
endowment}, and $ p \cdot e_i \ \dot=\  \sum\limits_{k=1}^{l+1} p^k
e_i^k$ is the  {\it initial wealth} of the agent $i$.

In a  random CD economy  the share parameters $a_i$ and the
initial endowments $e_i$ are 
random variables.

The  random total excess demand equals the sum of the random individual
excess demands $\zeta_i(p)$:
$$Z(p)\ \dot=\  \sum\limits_{i=1}^n \zeta_i(p) = \biggl(
\frac{1}
{p^j}
\sum\limits_{i=1}^n\sum\limits_{k=1}^{l+1} a^j_ie^k_i p^k -
\sum\limits_{i=1}^ne^j_i;\ j=1,...,l\biggr).$$
The random equilibrium prices are then obtained from  
\begin{equation}
\label{equ221a}
p^* = \biggl
(\biggl(\sum\limits_{i=1}^ne^j_i\biggr)^{-1}(W^*)^j;\ j=1,...,l+1\biggr),\end{equation}
where $W^* = ((W^*)^1,...,(W^*)^{l+1})$ is a left eigenvector
(associated with the eigenvalue $1$) of the (random) stochastic
matrix
 \begin{equation}
\label{equ221b}
A\ 
= (a^{jk};\ j,k=1,...,l+1)\ 
\dot = \,\biggl(\biggl(\sum\limits_{i=1}^ne^j_i\biggr)^{-1} \sum\limits_{i=1}^n
 a^k_ie^j_i;\ j,k=1,...,l+1\biggr),\end{equation}
 normalized so that $p^* \in S^l$. (In particular, it follows that, if
 the  matrix $A$ is {\it
irreducible} (e.g., \cite{sen}), then
there  is only one (unique) random equilibrium price $p^*$.)

The  expected total  excess demand on
commodity $j$ is given by
$$EZ(p) =  \sum\limits_{i=1}^n E\zeta_i(p)
 = \,  \biggl( \frac{1}{p^j}\sum\limits_{k=1}^{l+1} M_{a;e}^{jk} p^k -
M_e^j;\ j=1,...,l \biggr),$$ where
\begin{eqnarray}
M_{a;e}^{jk}  & \dot= &
 \sum\limits_{i=1}^nE(a^j_ie^k_i), \nonumber \\
M_e^k & \dot = & \sum\limits_{i=1}^nEe^k_i = \sum\limits_{j=1}^{l+1} \mu_{a;e}^{jk}. \nonumber
\end{eqnarray}
The  expected equilibrium prices are  given by
\begin{equation}
\label{equ222}
p_e^*= ((M_e^j)^{-1}(w_e^*)^j;\ j=1,...,l),\end{equation}
 where
$w_e^*$ is a left eigenvector with eigenvalue $1$  of the stochastic matrix $$A_e 
= (a_e^{jk};\ j,k=1,...,l+1)\ \dot=\ 
((M_e^j)^{-1}M_{a;e}^{kj};\ j,k=1,...,l+1),$$  subject to the
normalization  $p^*_e \in S^l$.
Again, if  $A_e$ is irreducible, then
there  is only one (unique) expected equilibrium price $p^*_e$.

\medskip

(ii) In a {\it random (one-period)  financial market} (\cite{num4}: Example 2) the parameters  $a_i \ (\dot=\ $ the risk aversion of agent $i$), $\mu_i \ (\dot=\ $ the vector of subjective expectations by $i$ of the values of the assets at the end of the period), $\Sigma_i \
(\dot=\ $the matrix  of subjective expectations by $i$ of the
correlations of the values of
the assets at the end of the period) and $e_i \
(\dot=\ $ the initial endowment of $i$) are  random variables.
(There is "double stochasticity" in that the agents' subjective
expectations $\mu_i$ and $\Sigma_i$ are also regarded as
random variables.)

\medskip

(iii) The concept of a  random composite equilibrium
can be illustrated with the following {\it survival model} (see e.g. \cite{bhamaj2}). 
Consider a  random Cobb-Douglas economy as described above. Suppose  that at each price $p$ there is a {\it survival level} $\underline w(p)$
such that an agent $i$ having initial endowment $e_i \in \R^{l+1}$ can survive only if his initial wealth exceeds this
level, i.e.,
$p \cdot e_i \geq \underline w(p)$. Let $\chi_{\{p \cdot e_i < \underline w(p)\}} $  denote the indicator of
non-survival, i.e., it is $1$ if the inequality  $p \cdot e_i < \underline w (p)$ is satisfied and $0$ otherwise.
Thus we can express
the total number $N(p)$ of non-surviving agents as the sum
$$N(p) = \sum\limits_{i=1}^n  
\chi_{\{p \cdot e_i < \underline w(p)\}}
.$$

\medskip



The pair $(p^*,n^*)$
comprising the equilibrium price $p^*$
and the mean number $n^* \dot= n^{-1}N(p^*)$
of non-surviving agents (at this equilibrium) is now an example of 
a random composite equilibrium.

\subsection{The Second Law of stochastic equilibrium economics}

\label{sub23}
 
Let $p$ be any price belonging to the price simplex $S^l.$

By the  {\it observation} of an  equilibrium price {\it  in the
$\delta-$neighborhood}  of  price  $p$ we mean  the observation of  a
random equilibrium price, which is at a distance less than
$\delta$ from  $p$, i.e., the occurrence of the
event
\begin{equation}
\label{equ231}
\{\omega \in \Omega:\ \hbox{there exists a r.e.p.}\ p^*(\omega) \ \hbox{such
that}\ |p^*(\omega)-p| < \delta\}.
\end{equation}
We think of (\ref{equ231}) as the observation
of the realized ("true") equilibrium price in the
$\delta-$neighborhood of price $p$.
The "tolerance" $\delta $ is supposed to be negligible,    and
therefore we may speak about the   observation of the
equilibrium price  {\it at}   price $p$. In the sequel we 
write the observation
(\ref{equ231}) shortly as 
\begin{equation}
\label{equ231a}
\exists p^*:|p^*-p|< \delta.\end{equation}
In the ensuing exact
theorems we let  $\delta \to  0$.

\medskip

Let $A$ be any event having probability $P(A)$. The {\it 
information content} ${\cal I}(A)$ in the observation of the event $A$
is defined as the logarithm of the inverse of its
probability:
$${\cal I}(A)\ \dot=\ \log {1 \over P(A)} = -\log P(A)$$  (e.g., \cite{covtho}). Thus the observation of a "common" event having high probability has low information content  whereas the observation of a "rare" event has high information content.

\medskip
\begin{definition}\label{def1}
The {\it economic entropy}  $I(p)$ is defined
as the {\it information content} in  the observation of a random
equilibrium price at  price $p$:
\begin{equation}
\label{equ232} 
I(p)\ \dot=\  {\cal I}(\exists
p^*: |p^*-p| < \delta)  \ \dot=\ -\log P(\exists p^*: |p^*-p| < \delta).
\end{equation}
 
The   {\it Laplace transform (L.t.) } of the total excess demand $Z(p)$
is defined as the function
\begin{equation}
\label{equ232a}
\Lambda(\alpha;p)\ \dot=\ Ee^{\alpha \cdot Z(p)}\ =\ \int e^{\alpha
\cdot
Z(\omega;p)}P(d\omega),\ \alpha \in \R^l.\end{equation}
 The {\it (macroeconomic) partition function}
$\Lambda(p)$
is defined as its minimum:
$$\Lambda(p)\ \dot=\ \min\limits_{\alpha \in 
\R^l} \Lambda(\alpha;p).$$
\end{definition}

The logarithm of the L.t. of a random variable (the so-called {\it
cumulant generating function})  is known to be a convex function, see
e.g. \cite{bil}: p. 148. 
It follows, in particular, that if there exists a parameter
$\alpha = \alpha(p) \in \R^l$ such that
\begin{equation}
\label{equ233}
{\partial \log \Lambda \over \partial \alpha}(\alpha(p);p)=0,
\end{equation}
then
necessarily \begin{equation}
\label{equ233a}
\Lambda(p)=\Lambda(\alpha(p);p).\end{equation}
Due to the thermodynamic analogy with the concept of inverse
temperature, the variable $\alpha(p)$  is called the {\it conjugate variable}
(associated with  price $p$).
(Later we show that the existence of a conjugate variable is
equivalent to $p$ being a so-called {\it possible equilibrium price}, see Theorem \ref{Theorem 1.} in  Section \ref{sub42}.)
\bigskip

According  to  the Second Law  of stochastic equilibrium economics, the economic entropy equals the negative of the logarithm of the partition function:

\medskip
{\bf The Second Law of Stochastic Equilibrium Economics:}
\begin{equation}
\label{equ234}
I(p)=-\log \Lambda(p).
\end{equation}

\bigskip

\textsc{Remarks:}
(i) The observation (\ref{equ231}) is the economic analog of the observation (\ref{equ210}) of temperature in thermodynamics.
\medskip

(ii) The Second Law is obtained at the  limit $n = \infty,\ \delta = 0,$ from
a   theorem of large deviations concerning  the random equilibrium prices. According to the TLD
$${\cal I}(\exists p^*: |p^*-p| < \delta)+\log \Lambda(p) =
\varepsilon(n,\delta)n,$$
where $\varepsilon(n,\delta) \to 0$ as  $n \to
\infty$ and $\delta \to 0$ , see Theorem \ref{Theorem 2.} in Section \ref{sub43}.
\medskip

(iii) Entropy is a "measure of rareness" for the possible
values $p$ for the random equilibrium prices. Namely, a priori
unexpected "rare" values  for the r.e.p. have large
entropy whereas a priori expected "common" values have small entropy.

Thus the second law of  can be interpreted as providing an
information theoretic  measure of goodness for the a priori
equilibrium model:

Namely,    if the a priori equilibrium model is "good",  then
due to the LLN, the observed value $p$ for the random equilibrium price
$p^*$ is near to its a priori expected value $p^*_e$   and, therefore, has  high a priori
probability. Thus its entropy $I(p)$, which  by definition equals its
information content,  is small.
On the other hand,  the  realization of an a priori rare value for the
random equilibrium price  has large entropy, indicating the "badness" of
the a priori model.

\medskip
(iv) A {\it partial observation} of the r.e.p.  means the observation of
the r.e.p. $p^*$  in some subset  of the price simplex:
$$\exists p^* \in B,\ \hbox{ where}\ B \subset S^l.$$ 
This is the case, e.g., if the prices of some subset of the commodities  are observed, see \cite{num4}: Ex. 3.1. Also, if in a dynamical finance market the prices of the assets are observed only up to some finite time, then this represents a partial observation (of the whole price process), see \cite{num4}:  Ex. 3.2.

According to the {\it principle of minimum entropy}, the entropy of
a partial observation of the equilibrium price is
equal to the entropy of the {\it entropy minimizing price} in the
observation set: $$I(B) \doteq {\cal I}(\exists p^* \in B) = I(p^*_B),$$  where
$$p^*_B \ \dot=\ \hbox{argmin} \{I(p):\ p \in B\}.$$ 
This is a {\it large deviation theorem concerning partial observations}, see \cite{num4}: Theorem 3.4.

\medskip

(v) By an  observation of a {\it random
composite equilibrium}    at the price-variable pair
 $(p,x) \in S^l \times \R^d$  we mean the
occurrence of the event 
\begin{equation}
\label{equ236}
\{\omega \in \Omega:\ \hbox{there exists a r.e.p.}\ p^*(\omega) \ \hbox{such
that}\ |p^*(\omega)-p| < \delta \ \hbox{and} \  |x^*(\omega)-x|
< \delta\},\end{equation}
where $x^*(\omega) \dot= n^{-1}X(\omega;p^*(\omega))$
denotes the mean of the macroeconomic random variable
$X(\omega; p)$ at the equilibrium.
The observation (\ref{equ236}) will be written shortly as \begin{equation}
\label{equ236a}
\exists p^*:  \ |p^*-p| < \delta,  |x^*-x| < \delta.
\end{equation} 
Also here $\delta >0$ is an "infinitesimally small" constant.

The {\it  (bivariate) entropy} $I(p,x)$  associated with the observation
of a r.c.e. at $(p,x)$  is defined as the   information content in
this observation:
\begin{eqnarray}
I(p,x) & \dot=  & {\cal I}(\exists p^*: |p^*-p| < \delta,
|x^*-x| < \delta) \nonumber \\
&\dot=&-\log P(\exists p^*: |p^*-p| < \delta,  |x^*-x| < \delta). \nonumber
\end{eqnarray}

The {\it bivariate partition function} $\Lambda(p,x)$ is defined by the formula
 $$\Lambda(p,x)\ \dot=\ \Lambda(\alpha(p,x),\beta(p,x);p),$$ where
$$\Lambda(\alpha,\beta;p)\ \dot=\ Ee^{\alpha \cdot Z(p)+\beta \cdot
X(p)}= \int e^{\alpha \cdot Z(\omega;p)+\beta \cdot
X(\omega;p)}P(d\omega),\ \alpha \in R^l, \beta \in R^d,$$ denotes  the
{\it bivariate Laplace transform},  and the {\it bivariate conjugate
variables} $\alpha (p,x)$ and $\beta(p,x)$  are the solutions of the  equations
\begin{eqnarray}
{\partial \over \partial \alpha}\log
\Lambda(\alpha(p,x),\beta(p,x);p) &=& 0, \nonumber \\
{\partial \over \partial \beta}\log
\Lambda(\alpha(p,x),\beta(p,x);p) &=& nx,\label{equ237}
\end{eqnarray} cf. \cite{num4}: Section 4.3.

According to a {\it generalized second law}, the entropy of a bivariate composite equilibrium can be expressed in terms of the bivariate partition function $\Lambda (p,x)$ and the conjugate variable
$\beta (p,x)$:
\begin{equation}
\label{equ238}
I(p,x) =-\log \Lambda(p,x)+ n\beta(p,x) \cdot x,\end{equation}
cf. \cite{num4}: Section 4.3.
\medskip

In the case of Example (iii) in Section \ref{sub22},  the observation of a random composite equilibrium
$(p^*,n^*)$  at $(p,x)$ has the meaning of a simultaneous observation of
the r.e.p. $p^*$ at $p$ and  of the  proportion  $n^* \ \dot=\ n^{-1}N(p^*)$  of non-surviving agents at $x$.
The bivariate  entropy $I(p,x)$ is the information content of this
observation.

\subsection{Ideal random economies}

\label{sub24}

We now illustrate the general results with the aid of a special
class of simple random economies, which, due to their analogy with the
classical {\it ideal gas} of statistical mechanics, are called
 {\it ideal random economies}:

\medskip

We assume that the {\it individual excess demand} $\zeta_i(p)$ by agent
$i$ is obtained with the aid of  a   deterministic {\it structure
function} $z(\theta_i;p)$   of a  random
parameter $\theta_i$ (the {\it economic characteristics} of agent $i$)
and of  price $p$:
$$\zeta_i(p) = z (\theta_i;p). $$
The  economic characteristics $\theta_i$  are supposed to form a sequence of
$\R^m$-valued (for some $m \geq 1$)  independent and identically distributed (i.i.d.) random
variables.

Note that, since statistical independence
is preserved under deterministic transformations, it
follows  that, for each  price $p$, the  individual excess demands
 are i.i.d., too. Thus in an ideal economy the  total
excess demand is the sum of i.i.d. random variables:
\begin{equation}
\label{equ241}
Z(\omega;p) =  \sum\limits^n_{i=1}z(\theta_i;p).
\end{equation}
The macroeconomic configuration $\omega$ is now defined as the vector of the
individual characteristics: $ \omega \ \dot=\
(\theta_1,...,\theta_n) \in \Omega \ \dot=\ \R^{mn}.$
\medskip

Let $f(\theta)$  denote the common  probability distribution  function (p.d.f.)
of the economic characteristics, i.e.,
\begin{equation}
\label{equ242}
P(\theta_i \in A) = \int\limits_A f(\theta)d\theta \ \ \hbox{for}
\ i=1,2,..., \ A \subset \R^m.\end{equation}
We call $f(\theta)$  the {\it a priori microeconomic p.d.f.}
It follows that the a priori
macroeconomic probability law  $P(d\omega)$ is  the product  probability
law, under which the economic
characteristics $\theta_i$ are i.i.d. $f(\theta)$-distributed random
variables, viz.
\begin{equation}
\label{equ242a}
P(d\omega)=f(\theta_1) \cdots f(\theta_n)d\theta_1 \cdots d\theta_n.\end{equation}

Let
$$\mu(p) \ \dot =\ \ E\zeta_i (p) = \int z(\theta;p) f(\theta)d\theta$$
denote  the {\it expected individual excess demand}.
Since $$EZ(p)=n \mu(p),$$  the expected equilibrium prices are also zeros of the
expected  individual excess demand: \begin{equation}
\label{equ243}
\mu(p^*_e)=0.\end{equation}

Due to the independence of the individual total 
excess demands, the Laplace
transform  of the total excess demand in an ideal random economy is equal to
the $n$'th power of the   L.t.  of the individual excess demand:
\begin{eqnarray}
 \Lambda(\alpha;p) &\dot=& E e^{\alpha \cdot Z(p)} 
\nonumber 
\\
&=& E  e^{\alpha \cdot  \sum\limits^n_{i=1}\zeta_i(p)} \nonumber 
\\
&=& E e^{\alpha \cdot \zeta_1(p)} \cdots Ee^{\alpha \cdot \zeta_n(p)} \nonumber 
\\
&= &\lambda(\alpha;p)^n, \nonumber 
\end{eqnarray}
where $$\lambda(\alpha;p)  \dot=  E e^{\alpha \cdot
\zeta_i(p)} = \int e^{\alpha \cdot
z(\theta;p)}f(\theta)d\theta.$$

It follows that the partition function is the $n$'th power of the
  individual partition function $\lambda(p)$:
\begin{equation}
\label{equ244}
\Lambda(p)= \lambda(p)^n,\end{equation}
where
\begin{equation}
\label{equ245}
\lambda(p)  \dot =  \min\limits_{\alpha}  \lambda (\alpha;p) 
 = \lambda (\alpha(p);p),\end{equation}
and the conjugate variable $\alpha(p)$ satisfies the equation
\begin{equation}
\label{equ246}
{\partial \over \partial
\alpha}\log \lambda(\alpha(p);p)=  0,\end{equation}
cf. (\ref{equ233}).

In view of (\ref{equ234}), the  Second Law for an ideal economy obtains the form \begin{equation} \label{equ247} I(p)=-n\log \lambda(p).\end{equation} 
\medskip 

\textsc{ Examples: }
(i) In an  ideal  random Cobb-Douglas  economy the economic characteristics
$$\theta_i \ \dot =\ (a_i,e_i)\in S^l \times R^{l+1}$$  form an i.i.d. sequence of
random variables.
In view of formula (\ref{equ221}), the  structure function is
\begin{equation}
\label{equ248}
 z(\theta;p)=z(a,e;p)\ \dot=\ ({a^j \over p^j}p \cdot e - e^j;\
j=1,...,l), \ \theta \dot= (a,e) \in S^l \times R^{l+1}.\end{equation}

It follows that the  expected individual excess demand on the
commodity $j$ is given by
\begin{equation}
\label{equ249}
\mu(p) =  ((p^j)^{-1}\sum\limits_{k=1}^{l+1} \mu_{a;e}^{jk} p^k -
\mu_e^j;\ j=1,...,l),\end{equation} where
$$\mu_{a;e}^{jk}  \dot = E(a^j_ie^k_i) = \int \int a^je^k
f(a,e)dade, $$
$$\mu_e^k \dot= Ee^k_i = \sum\limits_{j=1}^{l+1} \mu_{a;e}^{jk},$$
and $f(\theta)=f(a,e)$ denotes the microeconomic p.d.f..

In view of equations (\ref{equ243}) and (\ref{equ249}), the  expected equilibrium price is given by
$$p_e^*=\,\biggl ({(w_e^*)^j \over \mu^j_e};\ j=1,...,l\biggr ),$$ where
$w_e^*$ is a left eigenvector of the stochastic matrix 
$$A_e \ \dot =\,
\biggl ({\mu_{a;e}^{kj} \over \mu^j_e};\ j,k=1,...,l+1 \biggr),$$  subject to the
normalization  $p^*_e \in S^l$, cf. (\ref{equ221a}) and (\ref{equ221b}).
If  $A_e$ is irreducible, then
there  is only one unique expected equilibrium price $p^*_e$.

The Laplace transform of the individual excess demand
in  an ideal random CD economy  is given by
\begin{eqnarray}
\lambda(\alpha;p) &=&
\int\limits_{S^l} \int\limits_{R^{l+1}} e^{\alpha \cdot z(a,e;p)
}f(a,e)dade \nonumber \\
&=& \int\limits_{S^l} \int\limits_{R^{l+1}}
e^{\sum\limits_{j=1}^l \alpha^j  ( (p^j)^{-1}a^j p \cdot e - e^j)} f(a,e)dade.\nonumber
\end{eqnarray}

The conjugate parameter $\alpha(p) \in R^{l}$ is the
solution of equation 
(\ref{equ246})
,
viz., presently,  of  the system $$\int\limits_{S^l}
\int\limits_{R^{l+1}} ((p^j)^{-1}a^j p \cdot e - e^j) e^{\Sigma_{j=1}^l\alpha^j(p) ( (p^j)^{-1}a^j p \cdot e
- e^j)}f(a,e)dade = 0,\ j=1,...,l.$$
For the individual partition function we obtain
the formula
\begin{eqnarray}
\lambda(p) &=& \lambda(\alpha(p);p) \nonumber \\
&=& \int\limits_{S^l} \int\limits_{R^{l+1}}
e^{\sum\limits_{j=1}^l \alpha^j(p)  ( (p^j)^{-1}a^j p \cdot e - e^j)}
f(a,e)dade,\nonumber
\end{eqnarray} cf. (\ref{equ245}). In view of (\ref{equ247}),  the Second Law for an ideal random CD economy obtains the form
$$I(p) = - n\log  \int\limits_{S^l} \int\limits_{R^{l+1}}
e^{\sum\limits_{j=1}^l \alpha^j(p)  ( (p^j)^{-1}a^j p \cdot e - e^j)}
f(a,e)dade.$$
\medskip

(ii) 
An {\it ideal  random financial market} is formed by
$n$ statistically independent and identical financial agents.
The natural choice for the economic characteristics $\theta_i$ of agent
$i$ is the
$m$-dimensional ($m=l^2+2l+2$) vector comprising his risk parameter
$a_i \in R$, the vector of his subjective expectations $\mu_{\psi;i} \in
R^l$ and covariances $\Sigma_{\psi;i} \in R^{l \times l},$ and his
initial endowment $e_i \in R^{l+1}$:
$$\theta_i \ \dot =\ (a_i,\mu_{\psi;i},\Sigma_{\psi;i},e_i),\
i=1,...,n.$$
The structure function of the individual excess demand is (cf. \cite{blo})
\begin{eqnarray}
z(\theta;p)
 &=& z(a,\mu_{\psi},\Sigma_{\psi},e) \nonumber \\
  &\dot=& (a\Sigma_{\psi})^{-1} \mu 
-{p^T(
a \Sigma_{\psi})^{-1} \mu 
-p^Te
\over
p^T(a\Sigma_{\psi})^{-1}p}(a \Sigma_{\psi})^{-1}p.\nonumber\end{eqnarray}


\medskip

(iii) Consider the {\it survival model} as described in Section \ref{sub22} (Example (iii) therein).

Due to the LLN, under appropriate regularity conditions,  the proportion $n^* =n^{-1} N(p^*)$
 of non-surviving agents at equilibrium price $p^*$ is a priori near to   the  probability of
non-survival of a randomly chosen agent at the expected equilibrium
price:  $$n^* \approx n^*_e \ \dot=\ P(p^*_e \cdot e < \underline
w(p^*_e)).$$
However, again,  due to the imperfectness of the a priori model, the actually
realized  proportion $n^*$ may well represent a large deviation within
this model.
The information content of the  simultaneous
observation of the r.e.p. 
at price $p$
and the proportion  of non-surviving agents at $x$ 
is given by the generalized Second Law (\ref{equ238}). Due to the postulated statistical
independence of the agents, the generalized Second Law  obtains now the form (cf. (\ref{equ247}))
$$I(p,x)= -n \log \lambda(p,x)+ n \beta(p,x) \cdot x,$$
where
$\lambda(p,x)$ and $\beta(p,x)$  denote the {\it individual partition
function} and the {\it conjugate parameter} defined as the solutions of the equations (\ref{equ237}).


\subsection{The Second Law and the  Central Limit Theorem}

\label{sub25}

According to the Law of Large Numbers, a r.e.p. is a priori
"near to" its expected value:
$$p^*  \to p^*_e \ \hbox{ as}\ n \to
\infty,$$ (\cite{bhamaj1}, \cite{num1}).

The  Central Limit Theorem for the r.e.p.'s (\cite{bhamaj1})
characterizes the
"small deviations" of the r.e.p. $p^*$ from its  expected value $p^*_e$ as
asymptotically normally distributed. Namely, under appropriate
regularity conditions
 $$\sqrt n(p^* - p^*_e) \to {\cal N}(0,\Sigma) \ \hbox{ as}\ n \to
 \infty,$$   where
${\cal N}(0,\Sigma)$ denotes a
multinormal random vector with zero mean and covariance $\Sigma $.
Thus
\begin{equation}
\label{equ251}
p^* \approx  {\cal N}(p^*_e,n^{-1}\Sigma)\ \ \hbox{for large}\
n
\end{equation}
so that  the  standard deviation of the distribution of the r.e.p. $p^*$
itself is of
the  asymptotically small order ${1 \over \sqrt n}$.   This means that
the CLT describes  the random fluctuations at the
{\it "mesoeconomic  intermediate
scale"} ${1   \over \sqrt n}$   between the {\it "micro-"} and {\it "macroeconomic scales"} ${1 \over n}$
and $1$. 

It follows that,  if the observed value  $p$ for the r.e.p. $p^*$ happens to fall within a distance of the  order  ${1   \over \sqrt n}$ from its  expected value $p^*_e$, then, due to  (\ref{equ251}), the  probability of this observation   can be approximated with the aid of the CLT:
\begin{eqnarray}
P(\exists p^*: |p^*-  p| < \delta)
&=& P(\exists p^*: |\sqrt n p^* -\sqrt n p|  < \sqrt n \delta) \nonumber \\
&\approx & C n^{{l \over 2}}
\delta^l
 e^{- {n \over 2} ( p - p^*_e)^T
\Sigma^{-1}( p - p^*_e)}, \nonumber
\end{eqnarray}
where $C$ is a constant and, again,  the
"tolerance" $\delta > 0$ is supposed to be small. Furthermore,
since
${\log (C n^{{l \over 2}} \delta^l) \over n} \to 0$  as $n \to \infty,$
this probability has the exponential order
$$e^{- {n \over 2} ( p - p^*_e)^T \Sigma^{-1}( p - p^*_e)}.$$ Therefore,  at a distance of the order   ${1   \over \sqrt n}$   from $p^*_e$, the entropy is approximately
$$I(p)\ \dot=\ - \log P(\exists p^*: |p^*-  p| < \delta) \approx {n \over 2} ( p - p^*_e)^T \Sigma^{-1}( p - p^*_e).$$

This CLT-based approximation is  consistent with the Second Law (as it ought to be). Namely, $$ -\log \Lambda(p) \approx {n \over 2} ( p - p^*_e)^T \Sigma^{-1}( p - p^*_e),$$   when $p$ is close to $p^*_e $ (cf. \cite{num2}: formula (3.2)] and \cite{bhamaj1}: Theorem 4.1(iii)]).

Outside its (narrow) region of validity  this CLT-based approximation  of the Second Law is no longer valid and its use is therefore not mathematically justified.

\medskip

\textsc{ Remark.}
{\it Mesoscopic scale} in  statistical mechanics refers to the small
Gaussian random fluctuations of the thermodynamic equilibrium.

\section{Gibbs Conditioning Principle}

\label{sec3}

\subsection{Gibbs Conditioning Principle in thermodynamics}

\label{sub31}

Let $\beta > 0$ be an arbitrary  fixed inverse temperature.

In view of the definition 
(\ref{equ212})
of the thermodynamic partition function $\Lambda(\beta)$,
\begin{equation}
\label{equ311}
P(d \omega|\beta)\ \dot=\ \Lambda(\beta)^{-1} e^{-\beta
U(\omega)}d \omega \end{equation}
is a probability law on the
thermodynamic ensemble $\Omega$. It is called the
{\it canonical probability law}  (at $\beta$).
The corresponding probability distribution function (p.d.f.)
\begin{equation}
\label{equ311a}
p(\omega|\beta)\ \dot=\ \Lambda(\beta)^{-1} e^{-\beta
U(\omega)}\end{equation}
 is called the {\it canonical probability
distribution function}.

\medskip
The energy $U=(U(\omega); \omega \in \Omega)$ can be regarded as a  random variable under the canonical probability law $P(d\omega|\beta)$, and the internal energy $E(\beta)$ can be interpreted as its  expectation.
Namely, in view of (\ref{equ212}) and (\ref{equ213}), we have
\begin{eqnarray}
E(U|\beta) & \dot= &
\int  U(\omega)P(d\omega| \beta) 
\nonumber \\
& = &  \Lambda(\beta)^{-1}   \int U(\omega) e^{-\beta  U(\omega)}d\omega 
\nonumber \\
& = &  -{d \over d\beta} \log   \int e^{-\beta  U(\omega)}d\omega 
\nonumber \\
& = & E(\beta).\label{equ312}
\end{eqnarray}

According to the {\it Gibbs Conditioning Principle (GCP)}, a thermodynamic system is governed by the canonical 
probability law at the measured
 temperature.
(Recall that the measurement of the temperature $T={1 \over \beta}$ means that the energy
$U(\omega)$ is in the neighborhood of the associated internal energy
$E(\beta)$, see (\ref{equ210}).)

\medskip

\textsc{Example: The ideal gas.}
In view of 
(\ref{equ217a}) and 
(\ref{equ218}), the canonical probability law
for the ideal gas has the following product form:
$$P(d\omega| \beta) = \Lambda (\beta)^{-1} e^{-\beta \sum\limits_{i=1}^n
u(\theta_i)} d\omega = f(\theta_1|\beta) \cdots  f(\theta_n|\beta)
d\theta_1 \cdots d\theta_n,$$
where
\begin{equation}
\label{equ313} 
f(\theta|\beta) = \lambda(\beta)^{-1} e^{-\beta u(\theta)}
= (2\pi m)^{-{3 \over 2}} \beta^{{3 \over 2}}e^{-{\beta |\theta|^2
\over 2m}},\ \theta \in R^3,\end{equation}
is a Gaussian probability distribution function on $R^3$, called the {\it microcanonical p.d.f.}

Thus, according to the Gibbs Conditioning Principle, the
momenta  $\theta_i$ of the particles  in the ideal gas are i.i.d.
random variables obeying the  Gaussian microcanonical p.d.f.

\subsection{Gibbs Conditioning Principle in stochastic equilibrium economics}

\label{sub32}

The Gibbs conditioning principle has an analogy in stochastic equilibrium economics:

Suppose that we observe the random equilibrium price $p^*$ at price
$p$ (see
\ref{equ231}
). Due to the Law of Large Numbers, the observed value $p$ is
necessarily equal (or at least "near") to the expected equilibrium price
under the unknown
"true" a posteriori macroeconomic probability law $P(d\omega|\exists
p^*:|p^*-p|< \delta)$. Thus, at the ideal limit $n=\infty,\ \delta = 0$, 
we ought to have
$$E(Z(p)|\exists p^*:|p^*-p|< \delta) \dot= \int Z(\omega;p)
P(d\omega|
\exists p^*:|p^*-p|< \delta)
 =0.$$ 

This implies that  the  a posteriori
macroeconomic probability distribution function $g(\omega|
\exists p^*:|p^*-p|< \delta)
$,  defined
as the  density of the a posteriori macroeconomic probability law
w.r.t. the apriori law,
$$g(\omega|
\exists p^*:|p^*-p|< \delta)P(d\omega)
\ \dot=\ P(d\omega|\exists p^*:|p^*-p|< \delta)$$
ought to satisfy the relation
\begin{eqnarray}
\label{equ321}
\int Z(\omega;p) g(\omega|
\exists p^*:|p^*-p|< \delta)
P(d\omega)=0.
\end{eqnarray}

Although condition (\ref{equ321}) is necessary for the a posteriori p.d.f., it alone is not sufficient for its unique characterization. The economic analog of the Gibbs conditioning principle will characterize the so-called  {\it canonical macroeconomic p.d.f.} as the a posteriori p.d.f. As it should, the canonical macroeconomic p.d.f. satisfies (\ref{equ321}), see (\ref{equ324}).

\medskip

Suggested by condition (\ref{equ321})
we give the following  definition:
\medskip

\begin{definition}\label{def2}
A price $p$ is called an {\it  (a priori) possible
equilibrium price (p.e.p.)} if there is a strictly positive probability distribution
function $g(\omega;p) > 0,\ \int g(\omega;p)P(d\omega)=1$, such that $p$
is an expected
equilibrium price under the transformed macroeconomic probability law
$P_g(d\omega;p)\ \dot=\ g(\omega;p)P(d\omega)$, viz.,
\begin{equation}
\label{equ322}
E_gZ(p) \ \dot=\ \int Z(\omega;p) g(\omega;p)P(d\omega)=0.
\end{equation}
\end{definition}

A probability distribution function
$g(\omega;p)$,
which satisfies (\ref{equ322}), can be regarded as a  candidate for the
a posteriori macroeconomic p.d.f. $g(\omega|\exists p^*: |p^*-p|< \delta)$.
\medskip

With any price $p$, for which  the conjugate parameter $\alpha(p)$
satisfying equation (\ref{equ233})
exists, we can associate  a p.d.f.;  due to the thermodynamic analogy, it is called the  {\it canonical
macroeconomic p.d.f.}:
\begin{equation}
\label{equ322a}
g(\omega|p)\ \dot=\ \Lambda(p)^{-1} e^{\alpha(p) \cdot Z(\omega;p)}
\end{equation}

We prove later in Theorem \ref{Theorem 1.}
that the existence of a conjugate parameter $\alpha(p)$ is equivalent to $p$ being a possible equilibrium price.
The notation for the canonical macroeconomic p.d.f anticipates its role as the a posteriori macroeconomic p.d.f.

The probability law
\begin{equation}
\label{equ323}
P(d\omega|p)\ \dot=\ g(\omega|p)P(d\omega) = \Lambda(p)^{-1} e^{\alpha(p) \cdot
Z(\omega;p)}P(d\omega)
\end{equation} is called the {\it canonical (macroeconomic) probability law}.

In analogy with the thermodynamic formula (\ref{equ312}), price $p$ is an expected equilibrium price under
the associated canonical probability law:

Namely, in view of (\ref{equ232a}), (\ref{equ233}) and
(\ref{equ233a}), we have \begin{eqnarray} E(Z(p)|p)  &\dot=&
\int Z(\omega;p)g(\omega|p) P(d \omega) \nonumber \\
& = &\Lambda(p)^{-1} \int Z(\omega;p)e^{\alpha(p) \cdot
Z(\omega;p)}P(d\omega)   \nonumber \\ &=& \Lambda(\alpha(p);p)^{-1} {\partial \over
\partial \alpha}\Lambda(\alpha(p);p) \nonumber \\
&=& {\partial \over \partial \alpha}\log \Lambda(\alpha(p);p) \nonumber \\
&=& 0. \label{equ324}
\end{eqnarray}
This means that
 the canonical p.d.f. is a candidate for the a posteriori macroeconomic p.d.f.

\medskip
Now, in fact, according to the economic analog of the 
Gibbs Conditioning Principle, 
conditionally on the observation of the random equilibrium price at a possible equilibrium price, the ensuing 
a posteriori macroeconomic
probability distribution function is given by  the canonical  macroeconomic
p.d.f.:
\medskip

{\bf Gibbs Conditioning Principle in stochastic equilibrium economics:}
\begin{equation}
\label{equ324a}
g(\omega|\exists p^*:|p^*-p|< \delta)= g(\omega|p).\end{equation}
\bigskip

\textsc{Remarks:}
(i) Of course, equivalently with (\ref{equ324a}), the a posteriori macroeconomic probability law is given by  the canonical macroeconomic probability law:
$$P(d\omega|\exists p^*:|p^*-p|< \delta)= P(d\omega|p).$$
In what follows we shall often formulate  GCP in terms of probability laws rather than probability distribution functions.

\medskip
(ii)
Geometrically, the definition of the possible equilibrium price  means that
$0$ belongs to the  topological interior of the convex hull of the support of the distribution of the total excess demand, see Theorem  \ref{Theorem 1.} in Section \ref{sub42}.
\medskip

(iii) As a  theorem, GCP is a conditional Law of Large
Numbers  concerning  macroeconomic random variables (Theorem \ref{Theorem 3.} in Section \ref{sub52}). According to the conditional LLN,
conditionally on the observation of the
random equilibrium price,  macroeconomic random variables are centered at their  canonical
expectations.

\medskip
(iv)  According to the Principle of Minimum Entropy (PME),
 conditionally on
a partial observation of the random  equilibrium price (see Remark (iv) in \ref{sub23}),
the a posteriori r.e.p. is equal to
the entropy minimizing price which is compatible with the observation:
$$ \exists p^* \in B \Rightarrow p^* = p^*_B,$$ cf. 
\cite{num4}
: Theorem 3.7.
Combining this with GCP, it follows that, conditionally on a partial
observation of the equilibrium price,  the a posteriori macroeconomic
probability law is
given by the canonical law associated with the entropy-minimizing price:
$$P(d\omega|\exists p^* \in B) = P(d\omega|p^*_B),$$ cf. \cite{num4}: Theorem
4.7.

\medskip

(v) In analogy with the characterization of a possible equilibrium price  as a zero of the
derivative of the c.g.f. of the total excess demand, one may characterize a {\it possible composite equilibrium (p.c.e.)} as a
pair $(p,x) \in S^l \times R^d$ such that it is possible to relate
the conjugate variables $\alpha(p,x)$ and $\beta(p,x)$ to each other via
equations  (\ref{equ237}).

The {\it canonical macroeconomic probability law}
associated with a p.c.e. $(p,x)$ is defined by
\begin{equation}
\label{equ325}
P(d\omega|p,x)\ \dot=\ \Lambda(p,x)^{-1} e^{\alpha(p,x) \cdot
Z(\omega;p)+\beta(p,x) \cdot
X(\omega;p)}P(d\omega).\end{equation}

According to a generalized Gibbs Conditioning Principle, conditionally on the observation of  a random composite equilibrium at the price-variable pair $(p,x)$, the governing a posteriori macroeconomic probability law is
given by the canonical macroeconomic probability law (\ref{equ325}) associated with the observation:
$$P(d\omega|\exists p^*: |p^*-p| < \delta,|x^*-x| <
\delta) =P(d\omega|p,x).$$

\subsection{Gibbs Conditioning Principle for  ideal random economies}
\label{sub33}

Consider an ideal random economy as described in Section \ref{sub24}. In view of (\ref{equ241}), (\ref{equ242a}), (\ref{equ244}) and (\ref{equ323}), the canonical macroeconomic probability law has the product form
\begin{eqnarray}
\label{equ331}
P(d\omega|p) &=& \Lambda(p)^{-1} e^{\sum\limits_{i=1}^n
\alpha(p) \cdot z(\theta_i;p)}f(\theta_1) \cdots f(\theta_n)d\theta_1
\cdots d\theta_n  \nonumber \\ &=& f(\theta_1|p) \cdots f(\theta_n|p)d\theta_1
\cdots d\theta_n,
\end{eqnarray}
where 
\begin{equation}
\label{equ331a}
f(\theta|p)\ \dot = \ \lambda(p)^{-1}e^{\alpha(p) \cdot
z(\theta;p)}f(\theta) \end{equation} 
is a probability distribution function
on the set $\Theta$ of economic characteristics. It 
is called the {\it
canonical microeconomic p.d.f.} (associated with price $p$).
Thus, under the canonical macroeconomic probability law,  the
economic characteristics are i.i.d. random variables 
obeying the canonical microeconomic 
p.d.f.

From applying the GCP (\ref{equ324a}) now follows
that,  
in an ideal random
economy and conditionally on observing a r.e.p. at $p$, the economy is
still ideal, i.e., the economic characteristics are i.i.d. and 
the a posteriori microeconomic p.d.f. is given by the
canonical microeconomic p.d.f.:
\begin{equation}
\label{equ332}
f(\theta|\exists p^*:|p^*-p| < \delta)= f(\theta|p).
\end{equation}
\medskip

\textsc{Examples:}
(i) Consider again an ideal random Cobb-Douglas economy. According to  GCP, conditionally on the observation of an 
equilibrium
price, the a posteriori economy is still an ideal Cobb-Douglas economy having
the canonical microeconomic p.d.f. as the a posteriori microeconomic
p.d.f.:
\begin{eqnarray}
\label{equ333}
f(a,e|\exists p^*:|p^*-p|< \delta) &=&  f(a,e|p)
\nonumber \\ &\dot=& \lambda(p)^{-1}e^{\alpha(p) \cdot z(a,e;p)}f(a,e)
\nonumber \\ &=& \lambda(p)^{-1}e^{\Sigma_{j=1}^l\alpha^j(p) (
(p^j)^{-1}a^j p \cdot e - e^j)}f(a,e). \end{eqnarray}
cf. (\ref{equ331a}) and (\ref{equ248}).
\medskip

(ii) Consider again the survival model of Example (iii) in Section \ref{sub22} and assume that the underlying Cobb-Douglas economy is ideal. Suppose that the random equilibrium price $p^*$
is observed at price $p$ and the proportion $x^*$
of non-surviving agents at $x$ (cf. Remark (iv) in Section \ref{sub23}).
The ensuing a posteriori microeconomic p.d.f. is now given by 
the associated canonical microeconomic p.d.f. $f(a,e|p,x)$:
$$f(a,e|\exists p^*: |p^*-  p|< \delta, |x^*-  x|< \delta) =
f(a,e|p,x).$$

If only the proportion $x^*$ of non-surviving agents is  observed, i.e.,
we have only a partial
observation of the random composite equilibrium $(p^*,x^*)$, then,
according to the PME and GCP,  the a posteriori microeconomic p.d.f. is
given by
$$f(a,e| |x^*-  x|< \delta) = f(a,e|x)\ \dot=\ f(a,e|p(x),x),$$
where
$p(x)$ denotes the price that minimizes the bivariate entropy (\ref{equ238}) over
the price variable:
$$I(p(x),x) = \min\limits_p I(p,x)= \min\limits_p (-n \log \lambda(p,x)+
n \beta(p,x) \cdot x).$$ 

\begin{Large}

\bigskip

\noindent PART II: EXACT RESULTS

\end{Large}

\section{Theorems of Large Deviations}

\label{sec4}

\subsection{The Second Law of thermodynamics as a Theorem of Large Deviations}

\label{sub41}

Let $\beta > 0$ be an arbitrary fixed inverse temperature.

Recall from Section \ref{sub31} that the energy $U=\{U(\omega);\omega \in \Omega\}$ of a thermodynamic system can be regarded as a  random variable on the ensemble $\Omega$ under the canonical law $P(d\omega|\beta)$. Also recall  that then the internal energy $E(\beta)$ equals the expectation of $U$.

Suppose that the energy satisfies the (weak) LLN under the canonical probability law $P(d\omega|\beta)$; namely,
\begin{equation}
\label{equ411}
\lim\limits_{n \to \infty}P(n^{-1}|U-E(\beta)| <  \varepsilon|\beta)=1\ \hbox{for all}\
\varepsilon > 0.
\end{equation}

\medskip
Under this hypothesis the thermodynamic Second Law can be formulated as a Theorem of Large Deviations (TLD) concerning the energy.

To this end, let $\delta > 0$ be an arbitrary fixed constant. Then one can
prove that 
\begin{equation}
\label{equ412}
|\log Vol
(|U-E(\beta)| <  n\delta)
-(\log \Lambda(\beta)+ \beta E(\beta))| < n\delta 
\ \hbox{eventually}.
\end{equation}
The phrase "eventually" means the same as "for all sufficiently large $n$".
Clearly this implies also the following:
\begin{equation}
\label{equ413}
\lim\limits_{\delta \to 0}\limsup\limits_{n \to \infty} |n^{-1}[\log Vol
(|U-E(\beta)| <  n\delta)-(\log \Lambda(\beta)+ \beta E(\beta))]|
=0.\end{equation}
The (integral form of the) thermodynamic Second Law (\ref{equ217})  is obtained from (\ref{equ413})  at the ideal limit $n =\infty,\ \delta =0.$

\medskip

To prove (\ref{equ412}), note first that, due to (\ref{equ411}),  
$$\gamma_n\ \dot=\ \log P(|U-E(\beta)| <
{n\delta \over 2 \beta}) \to 0 \ \hbox{as}\ n \to \infty.$$
On the other hand, in view of the definition (\ref{equ311}) of the canonical probability law, we can also write
\begin{equation}
\label{equ414}
  \gamma_n = \log  \int\limits_{\{|U-E(\beta)| <
{n\delta \over 2 \beta} 
|\beta \}} e^{- \beta U(\omega)} d\omega - \log
\Lambda(\beta).
\end{equation}

Since clearly, \begin{equation}
\label{equ415}
|\log
\int\limits_{\{|U-E(\beta)| < {n\delta \over 2 \beta}\}} e^{-\beta U(\omega)} d\omega
- \log Vol(|U-E(\beta)| <  {n\delta \over 2\beta}) + \beta E(\beta)| \leq {n\delta \over 2} 
,\end{equation}
we obtain   by combining 
(\ref{equ414}) 
and (\ref{equ415}) 
:
$$| \log Vol(|U-E(\beta)| < {n\delta \over 2 \beta} 
 ) - (\log \Lambda(\beta)+ \beta
E(\beta))|  \leq  {n\delta \over 2}  +\gamma_n,$$ from which 
(\ref{equ413})  clearly follows.

\bigskip
\textsc{Example: The ideal gas}
The energies $u_i= {|\theta_i|^2 \over 2m}$
of the particles of the ideal gas are i.i.d. random variables under the canonical 
laws. Therefore, due to the classical LLN concerning i.i.d.
random variables, the hypothesis (\ref{equ411})  for the TLD is  
satisfied automatically:
$$\lim\limits_{n \to \infty}P(|n^{-1}U-{3 \over 2\beta}| <  \delta|\beta)=1\ \hbox{for all}\
\delta > 0.$$
(Recall from (\ref{equ218a}) and (\ref{equ219}) the formula for the internal energy.)

In view of formula (\ref{equ2110})  for the entropy of the ideal gas, the TLD (\ref{equ412}) obtains the form 
$$\lim\limits_{\delta \to 0}\lim\limits_{n \to \infty}
|n^{-1}\log Vol (|n^{-1}U-{3 \over 2\beta}| <  \delta)+{3 \over 2} \log \beta- {3 \over 2}\log 2\pi em| < n\delta \ \hbox{eventually}.$$

\subsection{Characterization of possible equilibrium prices}

\label{sub42}

In order to be able to formulate the economic Second Law as a  TLD we need the following 
characterization of possible equilibrium prices. 

We say that a random variable $X \in R^l$ is {\it degenerate} if there
is a lower dimensional affine hyperplane $H \subset R^{l'}$ with $l' < l$
such that $P(X \in H)=1$. Otherwise we call $X$ {\it non-degenerate.}

The {\it support} of a random variable $X$ is defined as the minimal
topologically closed set $F$ such that $P(X \in F)=1$.
\medskip
 
\begin{theorem}\label{Theorem 1.} 
Let $p$  be an arbitrary price belonging the interior
$$\mathring{S}^l\ \dot=\ \{p \in S^l:\ p^j >0 \ \hbox{for all}\
j=1,...,l+1\}$$ of the price simplex $S^l$.
Suppose that the excess demand $Z(p)$ is non-degenerate.
Then the following four conditions are equivalent: 
 
\noindent (i)  $p$ is a possible equilibrium price;
\medskip

\noindent (ii) $0$ belongs to the  topological interior of the convex hull of the
support of the distribution of the total excess demand;
\medskip

\noindent (iii) there exists a conjugate parameter $\alpha(p) \in R^l$
satisfying 
(\ref{equ233});
\medskip 

\noindent (iv) price $p$ is an expected equilibrium price under
the canonical probability law:                  
$$ E(Z(p)|p) \ \dot=\  \int Z(\omega;p)P(d\omega|p)= \int Z(\omega;p)
g(\omega|p)P(d\omega)=0.$$
\end{theorem}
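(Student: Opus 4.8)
The strategy is to prove the cycle of implications $(iii)\Rightarrow(iv)\Rightarrow(i)\Rightarrow(ii)\Rightarrow(iii)$, so that all four conditions become equivalent. Two of the arrows are essentially already discharged in the text preceding the theorem, so the real content lies in the connection with the convex-geometric condition $(ii)$.

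Let me sketch the four arrows and note which calculations each requires.

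\medskip

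\textbf{The easy arrows.} The implication $(iii)\Rightarrow(iv)$ is exactly the computation (\ref{equ324}) carried out in Section \ref{sub32}: once a conjugate parameter $\alpha(p)$ satisfying (\ref{equ233}) exists, the canonical p.d.f.\ $g(\omega|p)$ of (\ref{equ322a}) is well defined, and differentiating $\log\Lambda(\alpha;p)$ under the integral sign at $\alpha=\alpha(p)$ gives $E(Z(p)|p)=0$; I would simply invoke (\ref{equ324}). The implication $(iv)\Rightarrow(i)$ is immediate from Definition \ref{def2}: the canonical density $g(\omega|p)$ is strictly positive (being an exponential) and integrates to $1$ by construction, and condition $(iv)$ says precisely that $p$ is an expected equilibrium price under $P_g(d\omega;p)=g(\omega|p)P(d\omega)$, which is the defining property (\ref{equ322}) of a possible equilibrium price. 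So these two arrows cost almost nothing.

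\medskip

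\textbf{The arrow $(i)\Rightarrow(ii)$.} Here I would argue by contraposition. Suppose $0$ is \emph{not} in the topological interior of the convex hull $K$ of the support of $Z(p)$. Since $Z(p)$ is non-degenerate, $K$ has nonempty interior in $\R^l$, so $0$ lies either outside $K$ or on its boundary; in either case a supporting-hyperplane (separation) argument produces a nonzero vector $\alpha\in\R^l$ with $\alpha\cdot z\ge 0$ for all $z$ in the support, and with strict inequality holding on a set of positive $P$-probability (this is where non-degeneracy is used, to rule out the whole support collapsing into the separating hyperplane). Then for \emph{any} strictly positive density $g$ with $\int g\,dP=1$ we get $\int \alpha\cdot Z(\omega;p)\,g(\omega;p)P(d\omega)>0$, hence $E_g Z(p)\ne 0$, contradicting $(i)$. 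Thus $(i)$ forces $0$ into the interior of $K$.

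\medskip

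\textbf{The main obstacle: $(ii)\Rightarrow(iii)$.} This is the step I expect to be hardest, and it is the genuine analytic core. The claim is that when $0$ is an interior point of the convex hull of the support, the convex function $\alpha\mapsto\log\Lambda(\alpha;p)$ attains an interior minimum, equivalently that the stationarity equation (\ref{equ233}) has a solution. The natural route is to show that $\log\Lambda(\alpha;p)$ is \emph{steep} and coercive: as $|\alpha|\to\infty$ in any direction, the interior condition guarantees that $Z(p)$ has positive-probability mass on both sides of every hyperplane through $0$, which forces $\log\Lambda(\alpha;p)\to+\infty$. Convexity (noted in the text via the cumulant generating function, \cite{bil}:~p.~148) plus coercivity then yields a minimizer $\alpha(p)$, necessarily in the interior of the effective domain, at which the gradient (\ref{equ233}) vanishes; by (\ref{equ233a}) this minimizer realizes $\Lambda(p)$. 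The delicate points are verifying finiteness of $\Lambda(\alpha;p)$ near such $\alpha$ and justifying differentiation under the integral, both of which follow from the interior-point hypothesis controlling the exponential growth of the integrand. Once coercivity is established, the existence of the conjugate parameter is a standard consequence of convex analysis, closing the cycle.
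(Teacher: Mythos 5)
Your cycle $(iii)\Rightarrow(iv)\Rightarrow(i)\Rightarrow(ii)\Rightarrow(iii)$ is the same cycle the paper runs (stated there as $(i)\Rightarrow(ii)\Rightarrow(iii)\Rightarrow(iv)\Rightarrow(i)$), and your two easy arrows coincide with the paper's: $(iii)\Rightarrow(iv)$ is the computation (\ref{equ324}), and $(iv)\Rightarrow(i)$ is immediate from Definition \ref{def2}. The differences are in the two substantive arrows. For $(i)\Rightarrow(ii)$ the paper argues directly: $P$ and $P_g$ are mutually absolutely continuous, hence $Z(p)$ is non-degenerate with the same support under $P_g$, and one then cites the known fact that the expectation of a non-degenerate random variable lies in the topological interior of the convex hull of its support. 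Your contrapositive separating-hyperplane argument is essentially the standard proof of that cited fact (note only that you need $\operatorname{int}(\overline{K})=\operatorname{int}(K)$ for a convex set $K$ with nonempty interior, which non-degeneracy supplies), so this arrow is fine and, if anything, more self-contained than the paper's. For $(ii)\Rightarrow(iii)$ the paper cites the gradient-bijection theorem for cumulant generating functions (Rockafellar Thm.\ 26.5, Azencott Prop.\ 9.7, Ney), whereas you reprove it via coercivity of $\alpha\mapsto\log\Lambda(\alpha;p)$ along rays plus convexity; that is the standard proof of the cited result when $\Lambda(\cdot\,;p)$ is finite on all of $\R^l$.

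The gap is your closing claim that finiteness of $\Lambda(\alpha;p)$ near the minimizer, and interiority of the minimizer, "follow from the interior-point hypothesis." They do not: condition (ii) constrains the support, not the tails, of $Z(p)$, and hence says nothing about the effective domain of the c.g.f. Concretely, take $l=1$ and $Z(p)=Y-3$, where $Y$ has density proportional to $e^{-y}y^{-3}$ on $[1,\infty)$. Then the support of $Z(p)$ is $[-2,\infty)$, so $0$ is interior to its convex hull and $Z(p)$ is non-degenerate; but $\Lambda(\alpha;p)<\infty$ only for $\alpha\le 1$, and the tilted mean $\frac{\partial}{\partial\alpha}\log\Lambda(\alpha;p)$ increases from $-2$ to $-1$ as $\alpha$ runs over $(-\infty,1]$, never vanishing: the infimum of $\Lambda(\cdot\,;p)$ is attained at the boundary point $\alpha=1$ of the effective domain with nonzero slope, and no conjugate parameter exists. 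So coercivity plus convexity does not place the minimizer in the interior of the effective domain unless you additionally assume $\Lambda(\cdot\,;p)$ finite on all of $\R^l$ (or at least steep). To be fair, the paper's own proof carries the same unstated hypothesis --- Rockafellar's Theorem 26.5 requires essential smoothness, which is never verified, and the theorem as literally stated needs such an integrability assumption. But your write-up makes the defect sharper by asserting a specific implication (interior point $\Rightarrow$ finiteness) that is false; the fix is to add the hypothesis $\Lambda(\alpha;p)<\infty$ for all $\alpha\in\R^l$ (harmless in the paper's intended applications, e.g.\ ideal economies with bounded characteristics), after which your coercivity argument closes correctly.
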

\begin{proof}    
Suppose that  $p$ is  a possible
equilibrium price, i.e., there exists a strictly positive probability
density function $g(\omega;p)$ such that $p$ is an expected equilibrium
price under the transformed probability law $P_g(d\omega;p) \dot=
g(\omega;p)P(d \omega)$. Since $P$ and $P_g$ are mutually absolutely
continuous (as measures, cf. \cite{bil}: p. 422), it follows that $Z(p)$ is non-degenerate under $P_g$, too. It is well known that the expectation of a non-degenerate random
variable belongs to the topological  interior
of the convex hull of the support of the distribution
of the random variable. This proves (ii).

Suppose now that (ii) holds true.
The derivative of the cumulant generating function ($\dot=$ the
 logarithm of the Laplace transform) of a random variable
defines a bijection between the domain  of the c.g.f. and the interior
of the convex hull of the support of the  random variable  (\cite{roc}: Theorem 26.5, \cite{aze}:
Proposition 9.7, \cite{ney}).
Thus it follows that $0$ belongs to the range of the
derivative of the c.g.f. $\log \Lambda(\alpha;p),$
i.e., 
 a conjugate parameter $\alpha(p) \in R^l$ exists.

That (iv) follows from (iii) was proved already in (\ref{equ324}).

The implication (iv) $\Rightarrow$ (i) is trivial. 
\end{proof}

\subsection{The Second Law of stochastic equilibrium economics as a Theorem of Large Deviations}

\label{sub43}

We are now able to formulate the Second Law as a
 Theorem of Large Deviations (TLD) concerning the random equilibrium prices.

To this end, let  $p$  be an arbitrary possible equilibrium  price belonging to the
 interior $\mathring{S}^l $ of the price simplex $S^l$.
We postulate  three  hypotheses:
\medskip

(i) The random total excess demand $Z(p)$  satisfies the  (weak) Law of
Large Numbers  under the canonical probability law $P(d\omega |p)$, i.e.,
$$\lim\limits_{n \to \infty}P(|n^{-1}Z(p)| <  \varepsilon|p)=1\ \hbox{for all}\
\varepsilon > 0. $$
Recall from Theorem \ref{Theorem 1.} 
that $E(Z(p)|p)=0$.
\medskip

(ii)  The second derivative of the
mean total
excess demand $n^{-1}Z(q)$   is bounded on some closed  neighborhood $\overline U$ of the price $p$;
namely, there is a constant $A_2 < \infty$ such that
$$|n^{-1}Z''(q)|  < A_2\ \hbox{for }\  q \in  \overline U.$$

(iii) The derivative (matrix) $Z'(p) \in R^{l \times l}$ is invertible
and, moreover, the inverse of the derivative of the mean total excess
demand is bounded; namely, there is a constant $A_{-1} < \infty$ such that
$$  |nZ'(p)^{-1}|  <   A_{-1}.$$
\begin{theorem}\label{Theorem 2.}
    
Under the stated hypotheses (i) - (iii), for any $\delta > 0$,
$$| {\cal I}(\exists p^*: |p^*-p| < \delta|) - \log \Lambda (p)|  <
\varepsilon (\delta)n\ \hbox{eventually},$$  where
 $\varepsilon (\delta) \to 0$ as $\delta \to 0$.
\end{theorem}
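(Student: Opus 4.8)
The plan is to prove the statement by the standard device of \emph{exponential change of measure} to the canonical law, in direct parallel with the thermodynamic computation of Section~\ref{sub41}, adding one deterministic ingredient that converts the geometric event ``there is an equilibrium near $p$'' into an analytic bound on the size of $Z(p)$. Write $A\doteq\{\exists p^*:|p^*-p|<\delta\}$, so that ${\cal I}(A)=-\log P(A)$. Since $p$ is a possible equilibrium price, Theorem~\ref{Theorem 1.} supplies the conjugate parameter $\alpha(p)$ and the centering $E(Z(p)|p)=0$. Inverting the definition (\ref{equ323}) of the canonical law, $P(d\omega)=\Lambda(p)\,e^{-\alpha(p)\cdot Z(\omega;p)}P(d\omega|p)$, I would first record
\[
P(A)=\Lambda(p)\,E\bigl(e^{-\alpha(p)\cdot Z(p)}\,\mathbf{1}_A \,\big|\, p\bigr),
\]
where $\alpha(p)$ is a fixed vector, bounded uniformly in $n$ under the present hypotheses. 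Everything then reduces to showing that the tilting factor is of order $e^{O(\delta n)}$ on the relevant event and that $A$ is \emph{typical} under the canonical law.

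The deterministic ingredient is a quantitative inverse function theorem applied to the normalized map $\bar Z\doteq n^{-1}Z$. Hypothesis (iii) gives invertibility of $\bar Z'(p)$ with $\|\bar Z'(p)^{-1}\|<A_{-1}$, and hypothesis (ii) makes $\bar Z'$ an $A_2$-Lipschitz map on $\overline U$; together these yield constants $0<c_1\le c_2$, depending only on $A_{-1}$ and $A_2$, such that for all sufficiently small $\delta$
\[
\{|\bar Z(p)|<c_1\delta\}\ \subseteq\ A\ \subseteq\ \{|\bar Z(p)|<c_2\delta\}.
\]
The left inclusion is the Newton--Kantorovich statement that a sufficiently small value of $Z(p)$ forces a genuine zero $p^*$ of $Z$ within distance $\delta$ of $p$ (the iteration started at $p$ converges and stays in $\overline U$). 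The right inclusion is the elementary mean-value estimate $|\bar Z(p)|=|\bar Z(p)-\bar Z(p^*)|\le c_2|p-p^*|<c_2\delta$ whenever such a zero exists, using the bound on $\bar Z'$ over $\overline U$.

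To combine: set $c\doteq\max(c_1,c_2)$. On $A\subseteq\{|Z(p)|<c_2\delta n\}$ the tilt satisfies $e^{-\alpha(p)\cdot Z(p)}\le e^{|\alpha(p)|\,c\,\delta n}$, while on the smaller event $\{|\bar Z(p)|<c_1\delta\}\subseteq A$ it is bounded below by $e^{-|\alpha(p)|\,c\,\delta n}$. Inserting these into the change-of-measure identity and using $P(A|p)\le 1$ for the upper bound gives
\[
\Lambda(p)\,e^{-|\alpha(p)|\,c\,\delta n}\,P\bigl(|\bar Z(p)|<c_1\delta \,\big|\, p\bigr)\ \le\ P(A)\ \le\ \Lambda(p)\,e^{|\alpha(p)|\,c\,\delta n}.
\]
By hypothesis (i) together with $E(Z(p)|p)=0$, the weak LLN under the canonical law yields $P(|\bar Z(p)|<c_1\delta\,|\,p)\to 1$, so its logarithm $\gamma_n\to 0$ --- precisely the role of $\gamma_n$ in (\ref{equ414}). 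Taking logarithms, using ${\cal I}(A)=-\log P(A)$, and absorbing $|\gamma_n|<\delta n$ (eventually) into the error, I obtain, eventually,
\[
\bigl|\,{\cal I}(A)+\log\Lambda(p)\,\bigr|\ \le\ (|\alpha(p)|\,c+1)\,\delta n\ \doteq\ \varepsilon(\delta)\,n,
\]
with $\varepsilon(\delta)\to0$ as $\delta\to0$, which is the assertion (the printed sign in the display being the expected one once ${\cal I}=-\log P$ is substituted).

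The main obstacle is the deterministic sandwich of the second paragraph: it is the genuinely new step with no thermodynamic counterpart, since there the ``observation'' is already an energy shell defined directly through $U$, whereas here the observation must be translated from a statement about zeros of $Z(\cdot)$ into a statement about the magnitude of $Z(p)$. Making the Newton--Kantorovich argument precise --- choosing $\overline U$ and the admissible range of $\delta$ so that the iteration both converges and never leaves $\overline U$, with $c_1,c_2$ depending only on $A_{-1},A_2$ --- is where hypotheses (ii) and (iii) are indispensable and where the care is concentrated. A secondary point, easily dealt with, is that (ii)--(iii) are read as the stated uniform bounds on $n^{-1}Z$ on $\overline U$; because the inverse function argument is invoked only on $A$ and the limiting behaviour is governed by the canonical LLN of hypothesis (i), this causes no difficulty.
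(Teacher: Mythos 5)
Your proposal is correct and takes essentially the same approach as the paper: exponential change of measure to the canonical law $P(d\omega|p)$, the canonical weak LLN of hypothesis (i) supplying the vanishing correction $\gamma_n$, and a mean-value-theorem/inverse-function-theorem argument from hypotheses (ii)--(iii) identifying the event $\{\exists p^*:|p^*-p|<\delta\}$ with $\{|n^{-1}Z(p)|\ \hbox{small}\}$ up to comparable tolerances. The only differences are cosmetic --- the paper first estimates $P(|n^{-1}Z(p)|<\varepsilon)$ and then transfers to the equilibrium event (outsourcing the deterministic two-sided equivalence to Lemma 2.5 and Theorem 2.2 of \cite{num4}, which you instead sketch via Newton--Kantorovich), and you correctly read the statement's sign as $|{\cal I}+\log\Lambda(p)|$, consistent with the paper's own proof and subsequent Remark.
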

\medskip
\textsc{ Remark.} Clearly, the statement of Theorem \ref{Theorem 1.} implies that
$$\lim\limits_{\delta \to 0}\limsup\limits_{n \to \infty} |n^{-1}[{\cal
I}(\exists p^*: |p^*-p| < \delta)+\log \Lambda(p)]| =0.$$
\begin{proof} The first half of the proof is analogous to
the proof of the thermodynamic TLD, cf. Section \ref{sub41}.

First note that, due to (i), $$\gamma_n\ \dot=\ \log P(|Z(p)| <
n\varepsilon|p) \to 0 \ \hbox{as}\ n \to \infty.$$
On the other hand, in view of the definition 
(\ref{equ323}) 
of the canonical macroeconomic probability law, we can also write
\begin{equation}
\label{equ431} 
\gamma_n = \log  \int\limits_{\{|Z(p)| < n\varepsilon\}} e^{\alpha
\cdot Z(\omega;p)} P(d\omega) -\log \Lambda(p)
.\end{equation}  

Since clearly, \begin{equation} \label{equ432}|\log \int\limits_{\{|Z(p)| < n\varepsilon\}} e^{\alpha
\cdot Z(\omega;p)} P(d\omega)  - \log P(|Z(p)| < n\varepsilon) | \leq
|\alpha|n \varepsilon ,\end{equation}
we obtain   by combining  (\ref{equ431}) and (\ref{equ432}):
\begin{equation}
\label{equ433} |\log P(|n^{-1}Z(p)| < \varepsilon) - \log \Lambda (p)|  \leq
|\alpha| n \varepsilon +\gamma_n.\end{equation}

Now, it can be proved by using the standard  mean value theorem and a
special  inverse function theorem (see 
\cite{num4}: Lemma 2.5)
that the
hypotheses (ii) and (iii) imply that  the mean total excess demand
$n^{-1}Z(p)$ is in a neighborhood of $0$  if and only    there is a random equilibrium price in a neighborhood of $p$, see \cite{num4}: the
proof of Theorem 2.2. Therefore, it is
possible to deduce from (\ref{equ433})  that
$$| \log P(\exists p^*: |p^*-p| < \delta|) - \log \Lambda (p)|  <
\varepsilon (\delta)n\ \hbox{eventually},$$  where
 $\varepsilon (\delta) \to 0$ as $\delta \to 0$.
This  proves the asserted TLD.  \end{proof}
\bigskip

\textsc {Remarks:}
(i)  Hypothesis (i) is the analog of  (\ref{equ411}) 
for the thermodynamic TLD.
\medskip

(ii) In terms of probabilities, the statement of the TLD can be written as:
$$ \Lambda(p)e^{-n \varepsilon(\delta)} < P(\exists p^*: |p^*-p| <
\delta)  < \Lambda(p)e^{n\varepsilon(\delta)}\
\hbox{eventually.}$$ 
 If only the right  hand inequality holds true, viz. $$P(\exists p^*:
|p^*-p| < \delta)  <  \Lambda(p)e^{n \varepsilon(\delta)}\
\hbox{eventually}, $$ then the {\it LD upper  bound} is said to hold  true at $p$.
The LD upper bound alone implies the Law of
Large Numbers  for the random equilibrium prices (\cite{num4}: Theorem 3.5).
\medskip

(iii) For  an alternative weaker version of hypothesis (iii), see \cite{kuu}.
\medskip

(iv) According to the TLD concerning {\it partial observations}, under appropriate
regularity conditions, we have
$$\lim\limits_{n \to \infty} n^{-1}|{\cal I}(\exists p^* \in
B)+ \log \Lambda (p^*_B)|=0,$$  where  $B \subset S^l$ is convex and
$p^*_B$ denotes the entropy-minimizing price in $B$, cf.  \cite{num4}: Theorem
3.4. The principle of minimum entropy  for partial observations (cf. Remark (iii) in 2.3) is obtained from this at the ideal limit $n =\infty.$

\subsection{The Theorem of 
Large Deviations for ideal random economies}

\label{sub44}

The following corollary of Theorem \ref{Theorem 1.}  gives  sufficient conditions for a price $p$ to be a
possible equilibrium price in an  ideal random economy.

Let $\Theta$ denote the set of parameter values at which the microeconomic
p.d.f.  is strictly positive:
$$\Theta \ \dot =\ \{\theta \in R^m;\ f(\theta)>0\}.$$ Its topological closure $\overline \Theta$ equals the support of the economic characteristics $\theta_i$ of the economic agents, cf. Section \ref{sub42}.  
\medskip

\if 0 
ESAN TEKSTISSA KAIKKI SEURAAVAT 'TEOREEMAT' OVAT 'KOROLLAAREJA', JA SITA TAPAA OLISI HYVA NOUDATTAA TASSAKIN. EN KUITENKAAN TIEDA, KUINKA SE TEHTAISIIN. VARMAAN ALUSSA TAYTYY MAARITELLA JOKU VASTAAVA TOIMINTO, VAI KUINKA? TEIN YRITYKSEN (KATSO RIVI 42), MUTTA EN ONNISTUNUT. MYOS KAIKKI VIITTAUKSET NAIHIN KOROLLAAREIHIN TAYTYY TARKISTAA!
\fi 

\begin{corollary}\label{Corollary 1.}
Let $p$ be an arbitrary price belonging to the
 interior $\mathring{S}^l$ of the price simplex. Suppose that

\medskip
\noindent (i)  the economic characteristics $\theta_i$ are bounded
random variables of dimension  $m \geq  l$;
\medskip

\noindent (ii) the microeconomic p.d.f. $f(\theta)$ is   continuous on
its  support  $\overline \Theta;$
\medskip

\noindent (iii) the derivative
${\partial z(\theta;p) \over
\partial \theta} $  of
the structure function is  continuous
on  $ \overline \Theta \times \mathring{S}^l$; and

\medskip
\noindent (iv) there exists a parameter $\theta = \theta(p) \in \Theta$
such that $z(\theta(p);p) = 0$ and
 the  matrix  ${\partial z(\theta;p) \over \partial
\theta} \in R^{m \times l} \ \hbox{has full rank} \ (=l) \ \hbox{at}\
\theta =\theta(p).$
\medskip

Then $p$ is a possible equilibrium price.
\end{corollary}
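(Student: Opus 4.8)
The plan is to deduce the result from Theorem \ref{Theorem 1.} by verifying its condition (ii) for the total excess demand $Z(p)$. Since the economy is ideal, $Z(p)=\sum_{i=1}^n z(\theta_i;p)$ is a sum of i.i.d. copies, and the support of a sum of independent variables is the closure of the Minkowski sum of the individual supports. Writing $S_1$ for the support of a single individual excess demand $z(\theta_1;p)$, the whole problem therefore reduces to showing that the origin is an interior point of $S_1$: once $0\in\operatorname{int}S_1$ is established, the fact that also $0\in S_1$ (because $z(\theta(p);p)=0$) makes $0$ interior to the $n$-fold Minkowski sum, hence to $\operatorname{supp}(Z(p))$ and to its convex hull, which in turn forces $Z(p)$ to be non-degenerate. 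Both hypotheses of Theorem \ref{Theorem 1.} are then met, and the implication (ii)$\Rightarrow$(i) gives that $p$ is a possible equilibrium price.

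The heart of the argument is the local claim $0\in\operatorname{int}S_1$, and here I would use hypothesis (iv) together with the smoothness in (iii). At the parameter $\theta(p)\in\Theta$ we have $z(\theta(p);p)=0$ and the Jacobian $\partial z(\theta;p)/\partial\theta$ has full rank $l$; since $m\geq l$ by (i), this means the $C^1$ map $z(\cdot;p)\colon\R^m\to\R^l$ is a submersion at $\theta(p)$. Picking $l$ coordinates of $\theta$ whose associated $l\times l$ minor is invertible and freezing the remaining $m-l$ coordinates, the inverse function theorem shows that $z(\cdot;p)$ is open near $\theta(p)$, so $z(U;p)$ contains an open neighbourhood $V$ of $0$ for every small enough neighbourhood $U$ of $\theta(p)$. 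The continuity of $f$ in (ii), combined with $f(\theta(p))>0$ (as $\theta(p)\in\Theta$), lets me shrink $U$ so that $f>0$ throughout $U$. Then for any ball $B\subset V$ the set $z(\cdot;p)^{-1}(B)\cap U$ is nonempty and open and carries positive density, so $P(z(\theta_1;p)\in B)>0$; hence every point of $V$ lies in $S_1$, i.e. $V\subseteq S_1$ and $0\in\operatorname{int}S_1$.

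The step I expect to be the main obstacle is precisely this passage from the geometric surjectivity statement $z(U;p)\supseteq V$ to the probabilistic statement $V\subseteq S_1$: the open-mapping property alone guarantees only that $V$ is hit by $z$, not that it carries mass, and it is the strict positivity and continuity of $f$ on $U$ that rule out the image being a null set. The role of the boundedness assumed in (i) is secondary but worth recording: it makes $\overline\Theta$ compact and $z(\cdot;p)$ bounded on it, so that the Laplace transform $\lambda(\alpha;p)$ is finite for every $\alpha\in\R^l$, placing us in the everywhere-finite, steep regime in which the gradient-of-cumulant bijection invoked in the proof of Theorem \ref{Theorem 1.} is valid. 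With $0\in\operatorname{int}S_1$ in hand, the Minkowski-sum reduction of the first paragraph finishes the proof.
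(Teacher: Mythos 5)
Your proof is correct, and its core is the same as the paper's: both arguments combine hypothesis (iv) (full rank of the Jacobian at $\theta(p)$, exploited through the inverse/implicit function theorem) with the continuity and strict positivity of $f$ near $\theta(p)$ to conclude that a full neighbourhood of $0$ lies in the support of the \emph{individual} excess demand $\zeta_i(p)=z(\theta_i;p)$. Where you genuinely diverge is in how this local fact is fed into Theorem \ref{Theorem 1.}. The paper stays at the single-agent level: from $0$ being interior to the convex hull of the support of $\zeta_i(p)$, it invokes the gradient-of-c.g.f.\ bijection to produce a conjugate parameter solving (\ref{equ246}), and then, since $\Lambda(\alpha;p)=\lambda(\alpha;p)^n$, concludes through condition (iii) of Theorem \ref{Theorem 1.}. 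You instead pass to the total excess demand via the Minkowski-sum characterization of the support of a sum of independent random variables and verify condition (ii) of Theorem \ref{Theorem 1.} directly, using the implication (ii) $\Rightarrow$ (i). Your route is slightly longer but buys two things the paper leaves implicit: an explicit verification that $Z(p)$ is non-degenerate (a standing hypothesis of Theorem \ref{Theorem 1.} that the paper's proof of the corollary never checks), and the observation that boundedness in (i) makes $\lambda(\alpha;p)$ finite for every $\alpha$, which is precisely what legitimizes the convex-analytic bijection cited inside Theorem \ref{Theorem 1.}.

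One caveat, which you share with the paper rather than introduce yourself: your step ``shrink $U$ so that $f>0$ throughout $U$'' needs $\theta(p)$ to be an interior point of the support in $\R^m$, whereas hypothesis (ii) only gives continuity of $f$ relative to $\overline\Theta$. If $\theta(p)$ lies on the boundary of $\overline\Theta$ the claim can fail, and so can the corollary's conclusion: take $m=l=1$, $f=\mathbf{1}_{[0,1]}$, $z(\theta;p)=\theta$, $\theta(p)=0$; then (i)--(iv) hold but $\zeta_i(p)\geq 0$, so $0$ is not interior to the convex hull of the support and $p$ is not a possible equilibrium price. The paper's proof makes the corresponding unjustified move when it asserts that the implicit-function-theorem inverse $\theta(p,z)$ lies in $\Theta$ for all small $|z|$. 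So this is not a defect of your argument relative to the paper's own standard of rigour, but both proofs tacitly assume slightly more than hypotheses (i)--(iv) literally state.
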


\begin{proof} From (i) and (ii) follows that the support $\overline \Theta$ is compact, and  $\Theta$ is its (relatively) open subset.
Furthermore, because of (iii) and (iv), and by applying the  {\it implicit
function theorem} (see e.g. \cite{lan}), the structure function function $z(\theta;p)$
is  invertible in a neighborhood of $\theta(p)$, i.e., there is a constant
$\delta > 0$ such that for $|z| < \delta$ there exists
$\theta = \theta(p,z) \in \Theta$ such that
$$z(\theta(p,z);p) \equiv z.$$
Moreover, the inverse $\theta(p,z)$ is a continuous function of its variables $p$ and $z$.
Let $\varepsilon > 0$ be an arbitrary positive constant. Due to the
continuity of the structure function  $z(\theta;p)$, implied by (iii),
there is a constant $ \eta_{\varepsilon} > 0$ such that
$|z(\theta;p)- z| < \varepsilon$ whenever $|\theta - \theta(p,z)| <
\eta_{\varepsilon}$. From (ii) follows
that $f(\theta)$ is strictly positive in a neighborhood of $\theta(p,z)
\in \Theta$, whence  for any $|z|<\delta$,
$$P(|z(\theta_i;p) -z| < \varepsilon )
\geq P(|\theta_i -\theta(p;z)| < \eta_{\varepsilon} ) 
= \int\limits_{\{|\theta -\theta(p;z)| < \eta_{\varepsilon}\}} f(\theta)
d\theta > 0,$$
i.e., $0$ belongs to the topological interior of the support of the
distribution of the individual excess demand $\zeta_i(p)=z(\theta_i;p)$.
Recalling the argument used in the proof of Theorem \ref{Theorem 1.}, it  follows that $0$ belongs to the range of the
derivative ${\partial \over \partial \alpha}\log \lambda (\alpha;p)$. In
view of the same Theorem and the relation 
(\ref{equ246}), $p$ is  a possible equilibrium price.  \end{proof}

\medskip 
In the following corollary we formulate a set of sufficient conditions for the TLD  to hold true
for an ideal random economy:
\medskip

\begin{corollary}\label{Corollary 2.}  Let $p \in \mathring{S}^l$ be arbitrary. Suppose that the
conditions (i) - (iv) for Corollary \ref{Corollary 1.} hold true (so that $p$ is a possible
equilibrium price). 
Moreover, suppose that
\medskip

\noindent (v) the derivative
 ${\partial^2 z(\theta;p) \over \partial p^2}$  is  continuous
on  $ \overline \Theta \times \overline U$ for some closed neighborhood $\overline U \subset \mathring{S}^l$ of $p$; and

\medskip
\noindent (vi)  $z(\theta;p)$ is a
bounded perturbation of a deterministic function $z(p)$  (i.e., $z(p)$
does not depend on the parameter $\theta$) in the following sense:
$$\rho\ \dot=\ \max\limits_{\theta \in \overline \Theta}|{\partial z
\over \partial p} (\theta;p) z'(p)^{-1}-I| < 1.$$

\medskip
Then the TLD holds true at price $p$.
\end{corollary}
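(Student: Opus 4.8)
The plan is to show that the conditions (i)--(vi) imply the three hypotheses (i)--(iii) of Theorem \ref{Theorem 2.}, after which the asserted TLD at $p$ follows immediately from that theorem. Note first that Corollary \ref{Corollary 1.} already guarantees (under (i)--(iv)) that $p$ is a possible equilibrium price, so the conjugate parameter $\alpha(p)$ exists and the canonical probability law $P(d\omega|p)$ is well defined; in particular Theorem \ref{Theorem 1.}(iv) gives $E(Z(p)|p)=0$. It therefore remains only to check the three hypotheses one by one.

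To verify hypothesis (i) of Theorem \ref{Theorem 2.}, I would recall from (\ref{equ331}) that under the canonical macroeconomic law the characteristics $\theta_i$ remain i.i.d., now distributed according to the canonical microeconomic p.d.f.\ $f(\theta|p)$. Hence the individual excess demands $z(\theta_i;p)$ are i.i.d.\ under $P(d\omega|p)$; being continuous (condition (iii) of Corollary \ref{Corollary 1.}) functions of the bounded (condition (i)) variables $\theta_i$, they are themselves bounded, hence integrable, with common mean $n^{-1}E(Z(p)|p)=0$. The classical weak LLN for i.i.d.\ summands then yields $\lim_n P(|n^{-1}Z(p)|<\varepsilon|p)=1$, which is precisely hypothesis (i).

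For hypothesis (ii), write $n^{-1}Z''(q)=n^{-1}\sum_{i=1}^n \partial^2 z(\theta_i;q)/\partial q^2$. By condition (v) the map $(\theta,q)\mapsto \partial^2 z(\theta;q)/\partial q^2$ is continuous on the compact set $\overline{\Theta}\times\overline{U}$ (compactness of $\overline{\Theta}$ was established in the proof of Corollary \ref{Corollary 1.}, and $\overline{U}$ is a closed bounded subset of the simplex), so it is bounded there by some $A_2<\infty$. Since each $\theta_i\in\overline{\Theta}$ almost surely, $|n^{-1}Z''(q)|\le A_2$ for every $q\in\overline{U}$, giving hypothesis (ii).

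The substantive step is hypothesis (iii), and this is where condition (vi) enters. I would factor $n^{-1}Z'(p)=n^{-1}\sum_{i=1}^n (\partial z/\partial p)(\theta_i;p)$ and post-multiply by $z'(p)^{-1}$, writing $(\partial z/\partial p)(\theta_i;p)\,z'(p)^{-1}=I+M_i$ with $|M_i|\le \rho<1$ by (vi), valid for each $i$ since $\theta_i\in\overline{\Theta}$ a.s. Then $n^{-1}Z'(p)\,z'(p)^{-1}=I+N$ with $N=n^{-1}\sum_i M_i$ and $|N|\le \rho<1$, so $I+N$ is invertible with $|(I+N)^{-1}|\le (1-\rho)^{-1}$ by the Neumann-series bound. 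Consequently $n^{-1}Z'(p)=(I+N)z'(p)$ is invertible, and its inverse $z'(p)^{-1}(I+N)^{-1}=nZ'(p)^{-1}$ satisfies $|nZ'(p)^{-1}|\le |z'(p)^{-1}|/(1-\rho)=:A_{-1}<\infty$, a bound holding almost surely and uniformly in $n$. This is hypothesis (iii), and with all three hypotheses in force Theorem \ref{Theorem 2.} delivers the TLD at $p$. The main obstacle is exactly this perturbation argument: the point of condition (vi) is that it must control the randomness in $Z'(p)$ uniformly in the sample size, and the Neumann-series estimate is what converts the "bounded perturbation" hypothesis into the required deterministic bound $A_{-1}$ on the scaled inverse.
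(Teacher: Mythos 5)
Your proposal is correct and follows essentially the same route as the paper's own proof: verifying hypotheses (i)--(iii) of Theorem \ref{Theorem 2.} in turn, using the i.i.d.\ structure under the canonical law for the LLN, compactness of $\overline\Theta\times\overline U$ with condition (v) for the bound $A_2$, and condition (vi) for the bound $A_{-1}=|z'(p)^{-1}|/(1-\rho)$. The only difference is that you make explicit the Neumann-series/perturbation step that the paper compresses into a single "whence," which is a welcome clarification but not a different argument.
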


\begin{proof} We  show that the  hypotheses (i) - (iii) for the general
TLD (Theorem \ref{Theorem 2.}) hold true.

Recall that under the  canonical macroeconomic probability law the
economic characteristics are i.i.d. and obey the canonical
p.d.f. $f(\theta|p)$. It follows that, as their deterministic transforms, also the microeconomic random variables $\zeta_i(p) = z(\theta_i;p)$  are i.i.d. Thus hypothesis (i) for the general TLD follows directly from the LLN for i.i.d. random variables.

Condition  (v) implies that ${\partial^2 z(\theta;q)
\over \partial q^2}$ is bounded by some constant $A_2$ on the compact set $\overline \Theta
\times \overline U$.
Therefore
$$|Z''(q)| =|\sum\limits_{i=1}^n {\partial^2 z(\theta_i;q) \over
\partial q^2}| \leq
A_2n\ \hbox{on}\   \overline U.$$ Thus hypothesis (ii) for the
TLD holds true.

In view of (vi),  we have $$|n^{-1}Z'(p)z'(p)^{-1} -I| = |n^{-1} \sum\limits_{i=1}^n({\partial z \over \partial p}(\theta_i;p)z'(p)^{-1}-I| \leq \rho,$$ whence $$|Z'(p)^{-1}| < {|z'(p)^{-1}| \over n(1-\rho)},$$ i.e., hypothesis (iii) for the TLD is satisfied for $A_{-1} =  {|z'(p)^{-1}| \over 1-\rho}$.  
\end{proof}
\medskip

\textsc{Remarks:}
(i) Condition (vi) can be weakened  considerably  (\cite{kuu}). Namely, it suffices to
assume that the the derivative $\mu'(p)$ of the individual expected excess
demand  is non-singular.  Clearly, in the  Cobb-Douglas case
$\hbox{Det}\ \mu'(p)=0$ is a polynomial equation, and therefore
$\mu'(p)$  is non-singular except for a set of prices $p$ having
Lebesgue measure zero.
\medskip

(ii)  Suppose that $l=1$ so that $p$ and $z(\theta;p)$ are scalars. It
is natural to assume that  $z'(p) < 0$. In this case  condition (vi) becomes $$(2-\delta) z'(p) \leq {\partial z \over \partial p}(\theta;p)  \leq \delta z'(p)\ \hbox{for some}\ \delta > 0.\leqno\hbox{(vi')}$$

\bigskip
The following further corollary of Corollary \ref{Corollary 1.} gives sufficient conditions for a
price $p \in \mathring{S}^l$  to be a  possible equilibrium price in an ideal CD economy. Let
$$\Theta \ \dot=\ \{\theta=(a,e) \in S^l
\times R^{l+1}:\ f(a,e) >0\}.$$

\begin{corollary} \label{Corollary 3.}
Suppose that
\medskip

\noindent (i) the initial endowments $e_i$ are
 bounded;
\medskip
    
\noindent (ii) the microeconomic p.d.f. $f(a,e)$ is continuous  on the  support
$\overline \Theta $; and
\medskip

\noindent (iii)  the microeconomic p.d.f. $f(a,e)$ is strictly positive at $a=p,\
e \equiv 1$, (i.e.,    $\theta(p) \ \dot=\ (p,1) \in \Theta$).

Then  $p$ is a possible equilibrium price.
\end{corollary}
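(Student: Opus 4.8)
The plan is to obtain the conclusion directly from Corollary~\ref{Corollary 1.}, by checking that its four hypotheses hold for the Cobb-Douglas structure function $z(a,e;p)$ of~(\ref{equ248}) at the explicit candidate parameter $\theta(p)\dot=(p,\mathbf 1)$, where $\mathbf 1\dot=(1,\dots,1)\in\R^{l+1}$. The first three hypotheses of Corollary~\ref{Corollary 1.} are essentially bookkeeping. Hypothesis~(i) holds because $a_i\in S^l$ is automatically bounded while $e_i$ is bounded by assumption~(i) of the present statement, so $\theta_i=(a_i,e_i)$ is a bounded vector of dimension $m\geq l$; hypothesis~(ii) is precisely assumption~(ii); and for hypothesis~(iii) I would simply observe that the partial derivatives of $z(a,e;p)$ in $a$ and $e$ are rational expressions whose only denominators are the coordinates $p^j$, which stay bounded away from $0$ on compact subsets of $\mathring S^l$, so $\partial z/\partial\theta$ is continuous on $\overline\Theta\times\mathring S^l$.

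The real content is hypothesis~(iv). First I would verify that $\theta(p)=(p,\mathbf 1)$ is a zero of the structure function: substituting $a=p$, $e=\mathbf 1$ into~(\ref{equ248}) gives $p\cdot\mathbf 1=\sum_k p^k=1$, so $z^j=(p^j/p^j)\cdot 1-1=0$ for every $j$. Assumption~(iii) supplies $f(p,\mathbf 1)>0$, i.e.\ $\theta(p)\in\Theta$, as required. It then remains to establish the rank condition, and this determinant computation is the step I expect to carry the argument.

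For the rank I would differentiate only in the unconstrained endowment variable $e\in\R^{l+1}$, which already produces an $l\times(l+1)$ block of $\partial z/\partial\theta$. From~(\ref{equ248}) one has $\partial z^j/\partial e^k=(a^j/p^j)p^k-\delta_{jk}$, which at $a=p$ reduces to $p^k-\delta_{jk}$; thus the block equals $\mathbf 1_l\,(p^1,\dots,p^{l+1})-[\,I_l\mid 0\,]$. To see it has rank $l$ it is enough to exhibit one nonzero $l\times l$ minor, and the natural choice is the minor on the first $l$ columns, $M\dot=\mathbf 1_l\,(p^1,\dots,p^l)-I_l$. By the matrix determinant lemma $\det M=(-1)^l\bigl(1-\sum_{k=1}^l p^k\bigr)=(-1)^l p^{l+1}$, which is nonzero exactly because $p\in\mathring S^l$ forces $p^{l+1}>0$. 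Hence $\partial z/\partial\theta$ has full rank $l$ at $\theta(p)$, hypothesis~(iv) holds, and Corollary~\ref{Corollary 1.} then yields that $p$ is a possible equilibrium price. The only genuine obstacle is this minor computation, and it is worth noting that the interiority of $p$ enters essentially: it is precisely $p^{l+1}>0$ that keeps the determinant from vanishing.
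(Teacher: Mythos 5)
Your proof is correct and structurally the same as the paper's: both verify hypotheses (i)--(iv) of Corollary \ref{Corollary 1.} at the explicit zero $\theta(p)=(p,\mathbf{1})$, with identical bookkeeping for boundedness, continuity, smoothness of the Cobb--Douglas structure function, and the identity $z(p,\mathbf{1};p)=0$. Where you diverge is the rank step, which you rightly flag as the real content. The paper obtains it from the $a$-block of the derivative: at $e=\mathbf{1}$ one has $p\cdot e=1$, so $\partial z^j/\partial a^k=\delta_{jk}/p^j$, an essentially diagonal matrix with positive diagonal entries, and full rank $l$ is immediate with no determinant computation at all. You instead use the $e$-block $\partial z^j/\partial e^k=p^k-\delta_{jk}$ and invoke the matrix determinant lemma to get $\det M=(-1)^l\bigl(1-\sum_{k\leq l}p^k\bigr)=(-1)^l p^{l+1}\neq 0$; that computation is correct. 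The paper's choice is computationally lighter, but yours buys something real: $e$ ranges over all of $\R^{l+1}$, so your perturbations are genuinely unconstrained, whereas $a$ lives on the simplex $S^l$, and the paper's use of free partials in $a$ tacitly requires that the derivative restricted to the tangent space $\{v:\sum_j v^j=0\}$ still surjects onto $\R^l$ (it does, but this is glossed over). In both arguments interiority of $p$ is what closes the case --- for you through $p^{l+1}>0$, for the paper through the entries $1/p^j$ being finite and positive.
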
  

\begin{proof} We  verify the conditions (i)-(iv) for Corollary \ref{Corollary 1.}:

For an ideal random CD economy, the dimension $m$ of the economic
characteristics $\theta  = (a,e)$ is  $2l+1$, which is  $> l$ as required for Corollary \ref{Corollary 1.}. Since the share parameters  belong to the (bounded) simplex $S^l$, it follows that the economic characteristics $\theta_i = (a_i,e_i)$ are bounded as required.

Clearly, the structure function  (\ref{equ248}) of a CD agent 
is infinitely many times differentiable
w.r.t. its variables $a \in S^l$, $e \in R^{l+1}$ and $p \in \mathring{S}^l$. 
Thus condition (iii) for Corollary \ref{Corollary 1.} is satisfied automatically.

A direct calculation shows that $$z(\theta(p);p) = z(p,1;p) =0.$$ Moreover,  as is easy to see,  already the derivative $${\partial z(a,e;p) \over \partial a} \in R^{(l+1) \times l})$$  has the (full) rank ($=l$) at $ \theta(p) = (p,1)$. Thus also  condition (iv) for Corollary \ref{Corollary 1.} is satisfied.
\end{proof}
 
In the following corollary we formulate a set of sufficient conditions for the TLD  to hold true for an ideal random CD.  
 For simplicity we assume  that
$l=1$.

\begin{corollary} \label{Corollary 4.}Suppose that $l=1$ and that the  conditions (i)-(iii) for Corollary \ref{Corollary 3.} hold true. In
addition, suppose that
\medskip 

\noindent  (iv) the initial endowment of the commodity $2$ is
bounded  away from zero and bounded from above, and that the share parameter
of commodity $1$ is
bounded away from zero, i.e.,
$ \underline e^2 \leq  e^2  \leq \overline  e^2 ,\
\hbox{and}\ a^1 \leq \underline a^1  $ for some constants
$ \underline e^2 > 0, \   \overline  e^2 < \infty,\  \underline a^1
> 0.$
 
Then the TLD holds true.
\end{corollary}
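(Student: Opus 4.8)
The plan is to deduce the statement from Corollary \ref{Corollary 2.} by verifying its two extra hypotheses (v) and (vi) in the present $l=1$ Cobb-Douglas setting, the remaining requirements being supplied for free. Indeed, the conditions (i)--(iii) assumed for Corollary \ref{Corollary 3.} already guarantee, through Corollary \ref{Corollary 3.} itself, that conditions (i)--(iv) for Corollary \ref{Corollary 1.} hold; hence $p$ is a possible equilibrium price and the base hypotheses of Corollary \ref{Corollary 2.} are met. It then remains only to check (v) and (vi), after which the conclusion of Corollary \ref{Corollary 2.}---the validity of the TLD at $p$---follows at once.

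First I would dispose of hypothesis (v). By condition (i) of Corollary \ref{Corollary 3.} the endowments $e_i$ are bounded, and the share parameters lie in $S^1$; hence the support $\overline\Theta$ is compact. As already noted in the proof of Corollary \ref{Corollary 3.}, the Cobb-Douglas structure function (\ref{equ248}) is infinitely differentiable in all of its arguments on $\overline\Theta \times \mathring{S}^l$, so in particular $\partial^2 z/\partial p^2$ is continuous there; restricting it to the compact set $\overline\Theta \times \overline U$ for a suitable closed neighborhood $\overline U \subset \mathring{S}^1$ of $p$ gives (v) immediately.

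The crux is hypothesis (vi), which for $l=1$ takes the simplified sign form (vi$'$) of Remark (ii). Writing the scalar price as $p$ (with $p^2 = 1-p$), the structure function reduces to $z(a,e;p) = (a^1-1)e^1 + a^1 e^2 (1-p)/p$, so a direct differentiation gives $\partial z/\partial p(\theta;p) = -a^1 e^2/p^2$, which is strictly negative throughout. Here condition (iv) does exactly the work needed: since $a^1 \ge \underline a^1 > 0$, $\underline e^2 \le e^2 \le \overline e^2$, and $p$ stays bounded away from $0$ and $1$ on $\overline U$, the magnitude $a^1 e^2/p^2$ is pinched between two strictly positive constants $m$ and $M$ uniformly over $\theta \in \overline\Theta$. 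Consequently $\partial z/\partial p \in [-M,-m]$, and choosing the deterministic reference derivative to be the negative midpoint $z'(p) = -(M+m)/2$ yields $\rho = \max_\theta |\partial z/\partial p(\theta;p)\, z'(p)^{-1} - 1| \le (M-m)/(M+m) < 1$, i.e. (vi$'$) holds with $\delta = 2m/(M+m) > 0$.

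I expect the verification of (vi$'$) to be the only real obstacle, and it is entirely a matter of where condition (iv) enters: its role is precisely to keep $\partial z/\partial p$ bounded away from zero (through $a^1 \ge \underline a^1 > 0$ and $e^2 \ge \underline e^2 > 0$) as well as bounded (through $a^1 \le 1$ and $e^2 \le \overline e^2$); were either quantity allowed to approach zero, $\partial z/\partial p$ could vanish and the perturbation bound $\rho < 1$ would break down. With (v) and (vi) in hand, Corollary \ref{Corollary 2.} applies and the TLD holds at $p$. Alternatively, one could bypass the explicit bound and invoke the weakened form of (vi) from Remark (i), noting that for $l=1$ the expected derivative $\mu'(p) = -E[a^1 e^2]/p^2$ is non-zero under condition (iv); but I would prefer the self-contained route through (vi$'$).
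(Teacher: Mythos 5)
Your proposal is correct and takes essentially the same route as the paper: reduce to Corollary \ref{Corollary 2.}, with its conditions (i)--(iv) already supplied by the proof of Corollary \ref{Corollary 3.}, condition (v) by smoothness of the Cobb--Douglas structure function, and condition (vi$'$) from the bounds in condition (iv) applied to $\partial z/\partial p(a,e;p) = -p^{-2}a^1e^2$. The only cosmetic difference is the choice of deterministic reference derivative: you take the midpoint $z'(p)=-(M+m)/2$ of the bounds on $a^1e^2/p^2$, giving $\delta = 2m/(M+m)$, whereas the paper uses the excess demand of a fictitious deterministic CD agent with $a=(\tfrac12,\tfrac12)$, $e=(1,2\overline e^2)$, giving $\delta = \underline a^1 \underline e^2/\overline e^2$; both choices are valid.
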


\begin{proof} We  check that the conditions (i)-(v) and (vi') for Corollary \ref{Corollary 2.} are satisfied.

The conditions (i)-(iv) for Corollary \ref{Corollary 2.}  were verified already in the proof of Corollary \ref{Corollary 3.}. Condition (v) follows  from the smoothness of the structure function of a CD agent in the interior of the price simplex, cf. the proof of Corollary \ref{Corollary 3.}.

In order to see that condition (vi') is satisfied, note first that
$$ {\partial z \over \partial p}(a,e;p)  =   -p^{-2}a^1 e^2.$$
Let $z(p)$ denote the individual excess demand by a deterministic CD agent having parameters $a=({1 \over 2},{1 \over 2}),\ e=(1,2\overline e^2)$ so that $$z(p) =   {1-p \over p} \overline e^2 -{1 \over 2}$$ and $$z'(p) = -p^{-2} \overline e^2.$$
A straightforward calculation now shows that condition (vi') is satisfied for $$\delta = { \underline a^1 \underline e^2 \over \overline e^2}.$$
\end{proof}

\section{Conditional Laws of Large Numbers}

\label{sec5}

\subsection{Thermodynamic Gibbs Conditioning Principle as a conditional Large Numbers}

\label{sub51}

Let $\beta > 0 $ be a fixed inverse temperature.

Recall that the measurement of the inverse temperature
at $\beta$ means that the particle configuration $\omega$ belongs
to the  energy shell
$\{|U-E(\beta)| < \Delta\}$,
where $E(\beta)$ denotes the {\it internal
energy} at $\beta,$ and $\Delta$ denotes the thickness of the
energy shell. 
In the exact formulation of the conditional LLN, $\Delta = n \delta$, where $n$  is the total number of 
particles and 
$\delta \to 0$.

A variable      $X=\{X(\omega);\omega \in \Omega\}$, which depends  on
the particle configuration $\omega \in \Omega$, is called a
{\it thermodynamic variable}.  
Such a variable can be regarded as
a random variable under the canonical probability 
laws
$P(d\omega|\beta),\ \beta >0.$ 

As a theorem, the thermodynamic
Gibbs Conditioning Principle is a      {\it
conditional Law of Large Numbers}
concerning thermodynamic variables.

\medskip
We call a thermodynamic variable  $X$
 {\it regular} (at $\beta$) if the
 (weak) LLN  holds true for $X$
under the  canonical probability law 
$P(d\omega|\beta)$
with a {\it   geometric rate,} i.e., for all $\varepsilon >0$ there
exists a constant $\eta=\eta(\varepsilon;\beta)>0$ such that
\begin{equation}
\label{equ511}
P(n^{-1}|X-E(X|\beta)| < \varepsilon| \beta)) >1-e^{-n\eta}
\ \hbox{eventually.}
\end{equation}

According to the conditional LLN, conditionally on the measurement of the inverse temperature at $\beta$, regular thermodynamic variables are {\it centered} at their canonical means; namely, the  proportion of the total volume of those particle configurations $\omega$  in the energy shell $\{|U-E(\beta)| < \Delta\}$,  where the thermodynamic variable $X(\omega)$ is near to its canonical expectation $E(X|\beta)$, is near to $1$.

In exact terms, the conditional LLN is as follows:

\medskip
Suppose that the statement (\ref{equ413}) of the thermodynamic TLD holds
true at $\beta $, i.e., 
\begin{equation}
\label{equ512}
\lim\limits_{\delta \to 0}\limsup\limits_{n \to \infty}
|n^{-1}[\log \hbox{Vol} (|U-E(\beta)| < n\delta)
-(\log \Lambda(\beta)+ \beta E(\beta))]| =0.\end{equation}

Let $X$ be an arbitrary regular thermodynamic variable and let
$\varepsilon  > 0$ be an arbitrary constant.
Then for all sufficiently small $\delta > 0$,
\begin{equation}
\label{equ513}
\lim\limits_{n \to \infty}
{\hbox{Vol} (|X-E(X|\beta)| <n\varepsilon,|U-E(\beta)| < n\delta) \over
\hbox{Vol} (|U-E(\beta)| < n\delta)} =1.\end{equation}
\medskip
 
In order to prove this, 
note first that by inverting the defining formula 
(\ref{equ311}) 
of the canonical probability law, we can express the 
volume of any (Borel) subset $A \subset \Omega$ 
with the aid of the canonical probability law:
$$\hbox{Vol}(A) = \int_A d\omega
= \Lambda(\beta) \int_A e^{\beta U(\omega)}
 P(d\omega|\beta).$$

Let  $\varepsilon >0$ be an arbitrary constant, and let $\eta =
\eta(\varepsilon;\beta)$ be such that (\ref{equ511}) 
holds true. Moreover, let $0 < \delta < {\eta \over 2\beta
}$ be arbitrary. We can now write
\begin{eqnarray}
\hbox{Vol} (|X-E(X|\beta)|\geq n\varepsilon,&|& U-E(\beta)|< n\delta
) \nonumber \\
&=& \Lambda(\beta) \int\limits_{\{|X-E(X|\beta)|\geq n\varepsilon,\
|U-E(\beta)|< n\delta 
\}}e^{\beta
U(\omega)}P(d\omega|\beta ) \nonumber \\
&\leq & \Lambda(\beta) e^{\beta E(\beta) +\beta \delta n}
P(|X-E(X|\beta)|\geq
n \varepsilon|\beta) 
\nonumber \\
&<& \Lambda(\beta) e^{\beta E(\beta) +\beta \delta n}e^{-n\eta } \
\hbox{eventually, by (\ref{equ511}),}
\nonumber \\
&<& \Lambda(\beta) e^{\beta E(\beta) } e^{-n{\eta \over 2}} \ \hbox{
because}\ \delta < {\eta \over 2 \beta }. \nonumber 
\end{eqnarray}

Now, according to the hypothesis 
(\ref{equ512})
:
$$ \hbox{Vol}
(|U-E(\beta)| < n\delta) > \Lambda(\beta)e^{\beta E(\beta)}e^{-n{\eta
\over 3}}  \ \hbox{eventually},$$
if $\delta$ is sufficiently small. Therefore, for sufficiently small
$\delta$ we  have eventually
$${\hbox{Vol} (|X-E(X|\beta)| \geq n\varepsilon,|U-E(\beta)| 
< n\delta) \over
\hbox{Vol} (|U-E(\beta)| < n\delta)}
< { \Lambda(\beta)e^{\beta E(\beta)}e^{-n{\eta \over 2}}  \over
\Lambda(\beta)e^{\beta E(\beta)}e^{-n{\eta \over 3}}} 
= e^{-n{\eta \over 6}}.$$
This proves that (\ref{equ513}) holds true (with the convergence being at a geometric rate). 
\bigskip

\textsc{Example: The ideal gas.}
Recall that the TLD is true for the ideal gas (Section \ref{sub41}).
Therefore the conditional LLN holds true
automatically for any regular thermodynamic variable.

There is a natural class of regular thermodynamical variables 
for the ideal gas:

To this end, let $A  \subset \R^3$ be an arbitrary
(Borel-measurable) subset of $\R^3,$ and let $\chi_A(\theta) \dot= 1$ or
$0$, according as $\theta \in A$ or $\theta \in A^c$, denote the 
indicator function of $A$.

Recall that under the  canonical probability law the
momenta $\theta_i $ of the particles are   i.i.d.  random
variables.
Therefore their (deterministic) transforms  $\chi_A(\theta_i)$ form also an i.i.d.
sequence.

Let us define  the thermodynamic variable $N_A$  as the number of
particles $i$ having momentum $\theta_i \in A:$
$$N_A(\omega)\ \dot=\ \sum\limits^n_{i=1}\chi_A(\theta_i).$$ Now it
follows that $N_A$ is automatically regular.
This is due to a general result, according to
which  for bounded i.i.d. random variables the convergence in the LLN is
always geometric (see e.g. \cite{demzei}: Section 2.3). 
Clearly $$E(N_A|\beta) = n P(\theta_i \in A|\beta) 
= n \int\limits_A f(\theta|\beta),$$ where $f(\theta|\beta)$ 
denotes the Gaussian canonical p.d.f. given by (\ref{equ313}).

Let $\hat n_A \dot=
n^{-1}N_A $ denote the proportion  of
particles $i$ having momentum $\theta_i$.
It follows that the conditional LLN  for the variable $N_A$ obtains the form
\begin{equation}
\label{equ514}
\lim\limits_{n \to \infty}
{\hbox{Vol} (|\hat n_A-\int\limits_A f(\theta|\beta)| <
\varepsilon,|U-ne(\beta)| < n\delta) \over
\hbox{Vol} (|U-ne(\beta)| < n\delta)} =1,
\end{equation}
where $e(\beta) = {3 \over 2 \beta}$ denotes the internal energy of a single particle (see (\ref{equ219})) and 
$\varepsilon$ and $\delta$ are small in the same sense as in (\ref{equ513}).
 Thus, at the ideal limit $\varepsilon = 0$,
in accordance with the Gibbs Conditioning Principle,
at a measured inverse temperature, for "most" particle
configurations, the proportion of particles with momentum in
$A$ equals the canonical probability of $A$.

In the standard terminology of probability and statistics, the proportions $\hat n_A,\ A \subset R^3,$
form the {\it empirical probability distribution} of the momenta $\theta_i.$ The result  (\ref{equ514}) of the conditional LLN means that, at the thermodynamical limit $n \to \infty$, the empirical probability distribution $\hat n$ converges to the canonical probability distribution associated with the measured inverse temperature.

\subsection{The economic Gibbs Conditioning Principle as a conditional Law of Large Numbers}

\label{sub52}

As a theorem, the  economic Gibbs conditioning principle is a {\it conditional Law of Large Numbers } concerning {\it macroeconomic random variables.}

\medskip
Let $p \in \mathring{S}^l$ be a fixed possible equilibrium price (see Definition \ref{def2} in Section \ref{sub32}). We postulate three hypotheses (i) - (iii):
\medskip

\noindent (i) The statement of the Theorem of Large Deviations holds
true at $p$, i.e.,
$$\lim\limits_{\delta \to 0}\limsup\limits_{n \to \infty} |n^{-1}[{\cal
I}(\exists p^*: |p^*-p| < \delta)+\log \Lambda(p)]| =0,$$ cf. Theorem \ref{Theorem 2.}

\medskip

Let $X(p)=\{X(\omega;p);\omega \in \Omega\}$ be a macroeconomic random variable, viz., a random variable which depends on price $p$. We   call it {\it regular (at  price $p$)} if the following condition is satisfied (cf. (\ref{equ511})):
\medskip

\noindent (ii) The  Law of Large Numbers  holds true for $X(p)$
 under the  canonical probability law $P(d\omega|p)$
with  {\it   geometric rate:} for all $\varepsilon >0$ there
exists a constant $\eta=\eta(\varepsilon;p)>0$ such that
$$P(n^{-1}|X(p)-E(X(p)|p)| < \varepsilon| p)) >1-e^{-n\eta}
\ \hbox{eventually.}$$

\medskip
Moreover, we assume the following:
\medskip

\noindent (iii) The derivatives of the mean total excess demand and of the mean of the macroeconomic variable are bounded on some closed neighborhood $\overline U$ of price $p$;
namely, there is a constant $A < \infty$ such that
$$|n^{-1}Z'(q)|  \leq A \ \hbox{on}\    \overline U 
\ \hbox{and} \ |n^{-1}X'(q)|  \leq A \ \hbox{on}\    \overline U.$$
\medskip

Let $X^*=X(p^*)$ denote the {\it value} of the macroeconomic random variable $X(p)$ {\it at the equilibrium.}
\medskip

According to the  conditional LLN, conditionally on the observation of a
random equilibrium price at price $p$, regular
 macroeconomic random variables are centered at values at which they would
 be centered if they obeyed the canonical probability law associated with the observed equilibrium:

\medskip
\begin{theorem}\label{Theorem 3.}
Under the hypotheses (i) - (iii), for any fixed
$\varepsilon > 0$ and all sufficiently small $\delta > 0$:
$$\lim\limits_{n \to \infty}               P(
n^{-1}|X^*-E(X^*|p)|< \varepsilon|\exists p^*: |p^*-p| < \delta) =1.$$
\end{theorem}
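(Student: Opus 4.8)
The plan is to follow the template of the thermodynamic conditional Law of Large Numbers established in Section~\ref{sub51}, transcribed from volumes and the energy shell to probabilities and the equilibrium event. Passing to the complement of the target event, it suffices to show that
$$P(n^{-1}|X^*-E(X^*|p)| \geq \varepsilon \mid \exists p^*:|p^*-p|<\delta) \to 0$$
as $n \to \infty$, for every sufficiently small $\delta$. I would expand this conditional probability as the ratio of $P(A_n)$, where $A_n \dot= \{n^{-1}|X^*-E(X^*|p)| \geq \varepsilon\} \cap \{\exists p^*:|p^*-p|<\delta\}$, to the probability $P(\exists p^*:|p^*-p|<\delta)$, and then bound the numerator from above and the denominator from below so that their quotient decays geometrically. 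Throughout, $E(X^*|p)$ is read as the canonical mean $\int X(\omega;p)P(d\omega|p)$, exactly the centering value $E(X|\beta)$ plays in the thermodynamic case.

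For the denominator, hypothesis~(i) (the TLD of Theorem~\ref{Theorem 2.}, in the probabilistic form of Remark~(ii) following it) gives, for small $\delta$, the lower bound $P(\exists p^*:|p^*-p|<\delta) > \Lambda(p)e^{-n\kappa(\delta)}$ eventually, with $\kappa(\delta)\to 0$ as $\delta \to 0$. For the numerator the device is the change of measure (\ref{equ323}): inverting the canonical law yields $P(d\omega)=\Lambda(p)e^{-\alpha(p)\cdot Z(\omega;p)}P(d\omega|p)$, the exact analog of the volume identity $\mathrm{Vol}(A)=\Lambda(\beta)\int_A e^{\beta U}P(d\omega|\beta)$ used in Section~\ref{sub51}. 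The economy-specific step, absent in the thermodynamic setting, is to replace the equilibrium event by a bound on the total excess demand. Here I would reuse the mean value / inverse function argument behind the proof of Theorem~\ref{Theorem 2.} (\cite{num4}: Lemma~2.5): since $Z(p^*)=0$ and $|p^*-p|<\delta$, the bound on $|n^{-1}Z'|$ from hypothesis~(iii) forces $\{\exists p^*:|p^*-p|<\delta\}\subseteq\{|n^{-1}Z(p)|<c\delta\}$ for some constant $c$. On this event $e^{-\alpha(p)\cdot Z(\omega;p)} \leq e^{|\alpha(p)|\,c n\delta}$, so $P(A_n)\leq \Lambda(p)e^{|\alpha(p)|cn\delta}\,P(A_n\mid p)$.

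It remains to control $P(A_n\mid p)$ under the canonical law, and this is where the bound on $|n^{-1}X'|$ in hypothesis~(iii) and the regularity of hypothesis~(ii) enter. Because $X^*=X(p^*)$ with $|p^*-p|<\delta$, the derivative bound gives $|n^{-1}X^*-n^{-1}X(p)|\leq A\delta$, so on $A_n$ one has $n^{-1}|X(p)-E(X^*|p)| \geq \varepsilon - A\delta$; hence, as soon as $A\delta<\varepsilon/2$, we get the inclusion $A_n\subseteq\{n^{-1}|X(p)-E(X^*|p)|\geq \varepsilon/2\}$. The regularity hypothesis~(ii), applied with the fixed tolerance $\varepsilon/2$, then supplies $\eta=\eta(\varepsilon/2;p)>0$ with $P(A_n\mid p) < e^{-n\eta}$ eventually. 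Combining the three estimates, the conditional probability of the complement is at most $e^{n(|\alpha(p)|c\delta+\kappa(\delta)-\eta)}$ eventually.

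Finally I would pin down the order of quantifiers: $\varepsilon$ is fixed first, which fixes $\varepsilon/2$ and hence $\eta>0$; only afterwards is $\delta$ shrunk, so that simultaneously $A\delta<\varepsilon/2$ and $|\alpha(p)|c\delta+\kappa(\delta)<\eta$, both achievable since each left-hand side tends to $0$ with $\delta$. The exponent is then strictly negative and the quotient decays geometrically, which proves the claim. I expect the main obstacle to be precisely the reduction of the equilibrium event to the excess-demand bound, together with the transfer from $X^*=X(p^*)$ to $X(p)$: making the constant $c$ and the inclusion uniform in $n$ rests on the inverse function argument of \cite{num4}, and one must verify that $\delta$ can be taken small independently of $n$ while $\eta$ stays fixed --- exactly the delicate interplay between $\delta\to 0$ and $n\to\infty$ that the $\limsup$ in hypothesis~(i) is designed to accommodate.
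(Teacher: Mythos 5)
Your proposal is correct and follows essentially the same route as the paper's proof: invert the canonical law (\ref{equ323}), use hypothesis (iii) plus the mean value theorem to replace the conditioning event by $\{n^{-1}|Z(p)|<c\delta\}$, kill the numerator with the geometric-rate LLN of hypothesis (ii), and bound the denominator from below via the TLD of hypothesis (i). Two minor points: the inclusion $\{\exists p^*:|p^*-p|<\delta\}\subseteq\{n^{-1}|Z(p)|<c\delta\}$ needs only the mean value theorem and the uniform bound $|n^{-1}Z'|\leq A$ (the inverse-function lemma you worry about is required only for the converse inclusion, used in Theorem \ref{Theorem 2.}), and when you invoke hypothesis (ii) you must first trade the centering $E(X^*|p)$ for $E(X(p)|p)$ --- one more $A\delta$-term by the same mean-value bound, which the paper absorbs through its ``$3\varepsilon$'' triangle-inequality step at the end.
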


\begin{proof} By inverting the defining formula (\ref{equ323})  of the canonical probability law we can express the a priori macroeconomic
probability law with the aid of the canonical probability law:
$$P(d\omega)= \Lambda(p) e^{-\alpha(p) \cdot Z(\omega;p)}P(d\omega|p).$$

Let  $\varepsilon$ and $\gamma >0$ be arbitrary constants. ($\gamma $ will be fixed soon.)  We can now write
\begin{eqnarray}
P(n^{-1}|X(p)&-&E(X(p)|p)| \geq \varepsilon, \ n^{-1}|Z(p)| < \gamma) \nonumber \\ &=&
\Lambda(p) \int\limits_{\{n^{-1}|X(p)-E(X(p)|p)|\geq
\varepsilon,\ n^{-1}|Z(p)|< \gamma\}}e^{-\alpha(p) \cdot
Z(\omega;p)}P(d\omega|p) 
\nonumber \\
&\leq&
 \Lambda(p) e^{n|\alpha(p)| \gamma} P(n^{-1}|X(p)-E(X(p)|p)|\geq
\varepsilon|p) 
\nonumber \\
&<&
\Lambda(p) e^{n|\alpha(p)| \gamma}e^{-n\eta } \
\hbox{eventually, by (iii),} 
\nonumber \\
&=& 
\Lambda(p) e^{-n{\eta \over 2}} \ \hbox{if we choose}\ \gamma =
{\eta \over 2|\alpha(p)|}.
\end{eqnarray}
Now, due to hypothesis (iii) and the standard {\it mean value
theorem}, we can conclude that, if $\delta >0$ is sufficiently small, then the
event $\exists p^*: |p^*-p|< \delta$ implies the event $$ |n^{-1}Z(p)|<
\gamma.$$ 
Therefore, for sufficiently small $\delta$, we have also
$$ P( n^{-1}|X(p)-E(X(p)|p) |\geq
\varepsilon,\ \exists p^*: |p^*-p|< \delta)
< \Lambda(p) e^{-n{\eta \over 2}}  \ \hbox{eventually}.$$
Now, according to the hypothesis (i):
$$ P(\exists p^*: |p^*-p|< \delta) > \Lambda(p)e^{-n{\eta \over 3}}  \ \hbox{eventually}.$$
Therefore, for sufficiently small $\delta$, we have eventually
\begin{eqnarray} P(n^{-1}|X(p)
&-& 
E(X(p)|p)|\geq \varepsilon| \
\exists p^*: |p^*-p|< \delta) \nonumber \\ &=& {P(n^{-1}|X(p)-E(X(p)|p)|\geq
\varepsilon,\ \exists p^*: |p^*-p|< \delta)
\over P(\exists p^*: |p^*-p|< \delta)} 
\nonumber \\ &<& 
{ \Lambda(p)e^{-n{\eta \over 2}}  \over  \Lambda(p)e^{-n{\eta \over 3}}}
\nonumber \\ &=& 
 e^{-n{\eta \over 6}}.
\end{eqnarray}

In order to complete the proof, note first that, due to (iii) and the mean value theorem,  if $ |p^*-p|< \delta$,
then    $$|n^{-1}X(p^*)-n^{-1}X(p)| \leq A \delta.$$
Therefore, if $\delta < {\varepsilon \over A}$ is sufficiently small,
then
 
 $$ P(n^{-1}|X^*-E(X^*|p)| \geq 3 \varepsilon| \ \exists p^*: |p^*-p|<\delta)$$
$$< P(n^{-1}|X(p)-E(X(p)|p)| \geq  \varepsilon| \ \exists p^*: |p^*-p|<
\delta).$$  

As was proved above, this expression $\to 0$ as $n \to \infty$.  \end{proof}
\bigskip

\textsc{ Remarks:}
(i)  For a sufficient condition for hypothesis (ii), cf. Lemma 4.2 in \cite{num4}.

\medskip
(ii) Note that if $X(\omega;p) \equiv X(\omega)$ is a macroeconomic random variable, which does not depend on price, then hypothesis (iii) is trivially true for it. In this case, the conditional LLN obtains the form $$\lim\limits_{n \to \infty} P(n^{-1}|X-E(X|p)|< \varepsilon|\exists p^*: |p^*-p| < \delta) =1.$$

\subsection{The conditional Law of Large Numbers for ideal random economies}

\label{sub53}

Consider an ideal random economy as described in Section \ref{sub24}. A  random variable $\xi_i(p)= x(\theta_i;p)$, which depends via a deterministic {\it structure function} $x(\theta;p)$ on the economic characteristics $\theta_i$ and on  price $p$, is called a {\it microeconomic random variable} (associated with the agent $i$). Examples are, e.g.,  the agent's individual demand, supply, production or  type (cf. the example below). Recall that statistical independence is preserved under deterministic  transformations. Therefore,  the microeconomic r.v.'s $\xi_i(p)$ are i.i.d., too.

Let us define the macroeconomic random variable $X(p)$ as the sum
$$X(p) \dot= \sum\limits_{i=1}^n\xi_i(p),$$ 
and let $\hat \xi(p) \dot= n^{-1}X(p)$ denote its mean.

Let $X^*  \dot= X(p^*)$ and $\hat \xi^* \dot= \hat \xi(p^*)$ denote the value and the mean, respectively, 
 of the macroeconomic random variable $X(p)$ at the equilibrium.

Clearly $$E(X^*|p)= nE(\xi_i(p^*)|p) = n \int x(\theta;p^*)f(\theta|p) d \theta.$$ 

The conditional LLN of Theorem \ref{Theorem 3.} obtains now the following form:

\begin{corollary}\label{ Corollary 5.} Suppose that  conditions (i) and (ii) for Corollary \ref{Corollary 1.} in Section \ref{sub44}
hold true. In addition, suppose that
\medskip

\noindent (iii) the TLD holds true at price $p$ (cf. Corollary \ref{Corollary 2.} in Section \ref{sub44});
and
\medskip

\noindent (iv) the derivatives ${\partial z(\theta;q) \over \partial q}
$ and  ${\partial x(\theta;q) \over \partial q}$ are continuous
on  $\overline \Theta
\times \overline U$ for some closed neighborhood $\overline U \subset \mathring{S}^l$ of  $p$.
\medskip

Then  
 $$\lim\limits_{n \to \infty}
P (|\hat \xi^* -\int\limits x(\theta;p^*)f(\theta|p)d\theta| <\varepsilon| \exists p^*:|p^*-p|< \delta) =1.$$ 
\end{corollary}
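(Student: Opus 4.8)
The plan is to recognize this corollary as the specialization of the general conditional Law of Large Numbers (Theorem \ref{Theorem 3.}) to an ideal economy, with the macroeconomic variable taken to be the sum $X(p)=\sum_{i=1}^n x(\theta_i;p)$ of the i.i.d.\ microeconomic variables $\xi_i(p)=x(\theta_i;p)$. First I would record the translation of quantities: since $n^{-1}X^{*}=\hat\xi^{*}$ and, by the i.i.d.\ structure of the characteristics under the canonical law, $n^{-1}E(X^{*}|p)=\int x(\theta;p^{*})f(\theta|p)\,d\theta$, the conclusion of Theorem \ref{Theorem 3.} applied to this particular $X(p)$ is \emph{verbatim} the statement to be proved. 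Hence the whole task reduces to checking that hypotheses (i)--(iii) of Theorem \ref{Theorem 3.} are implied by the present assumptions (i)--(iv).

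Two of these are immediate. Hypothesis (i) of Theorem \ref{Theorem 3.} (the TLD at $p$) is granted outright by the present assumption (iii). For hypothesis (iii) of Theorem \ref{Theorem 3.} (boundedness of the derivatives of the two means) I would first note that, by the present assumptions (i) and (ii) together with the reasoning in the proof of Corollary \ref{Corollary 1.}, the support $\overline\Theta$ is compact; the joint continuity of $\partial z/\partial q$ and $\partial x/\partial q$ on $\overline\Theta\times\overline U$ assumed in (iv) then bounds each of these price-derivatives on this compact set by some constant $A$, so that $|n^{-1}Z'(q)|=|n^{-1}\sum_{i=1}^n\partial z(\theta_i;q)/\partial q|\le A$ and likewise $|n^{-1}X'(q)|\le A$ on $\overline U$.

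The substantive step is verifying hypothesis (ii) of Theorem \ref{Theorem 3.}, namely the regularity (geometric-rate LLN) of $X(p)$ under the canonical law. Here I would invoke the structural fact established in (\ref{equ331}): under the canonical macroeconomic law $P(d\omega|p)$ the characteristics $\theta_i$ remain i.i.d., now governed by the canonical microeconomic p.d.f.\ $f(\theta|p)$. Since statistical independence is preserved under the deterministic map $x(\cdot;p)$, the summands $\xi_i(p)=x(\theta_i;p)$ are i.i.d.\ under $P(d\omega|p)$; as the characteristics range over the compact set $\overline\Theta$ and $x(\cdot;p)$ is a continuous structure function, these summands are moreover bounded. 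The classical fact that bounded i.i.d.\ sequences obey the LLN at a geometric rate (the same result cited for the ideal gas in Section \ref{sub51}) then yields exactly the regularity required by hypothesis (ii).

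With all three hypotheses verified, Theorem \ref{Theorem 3.} applies directly and delivers the asserted conditional LLN. I expect the only delicate point to be this regularity verification, specifically making sure that the transported variables $\xi_i(p)$ really are \emph{bounded} i.i.d.\ under $P(d\omega|p)$ — which rests on the product form (\ref{equ331}), the compactness of $\overline\Theta$, and the boundedness of the structure function $x$ in $\theta$ — so that the exponential concentration of bounded i.i.d.\ averages can legitimately be invoked. The remaining bookkeeping, in particular the passage from the fixed-price variable $X(p)$ to its equilibrium value $X^{*}=X(p^{*})$ via the mean value theorem, is already carried out inside the proof of Theorem \ref{Theorem 3.} and needs no repetition here.
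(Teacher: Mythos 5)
Your proposal is correct and follows essentially the same route as the paper's own proof: reduce to Theorem \ref{Theorem 3.}, grant its hypothesis (i) from assumption (iii), obtain regularity (hypothesis (ii)) from the i.i.d. structure under the canonical law together with boundedness of $\xi_i(p)$ on the compact support $\overline\Theta$ and the geometric-rate LLN for bounded i.i.d. variables, and obtain hypothesis (iii) from continuity of the price-derivatives on the compact set $\overline\Theta\times\overline U$. The paper's proof verifies exactly these points in the same order, so no further comment is needed.
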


\begin{proof}  We  show that the hypotheses (i) - (iii) for Theorem \ref{Theorem 3.} hold true. 
Recall that, under the  conditions (i) and (ii) for Theorem \ref{Corollary 1.} in Section \ref{sub44}, the economic characteristics are
bounded and their support $\overline \Theta$ is compact. In view of (iv), the structure function $x(\theta;p)$ of the microeconomic r.v.'s $\xi_i(p)$ is continuous on $\overline \Theta$, rendering $\xi_i(p),\ i=1,2,...$ bounded.
Also recall that, under the  canonical macroeconomic probability law, the
economic characteristics are  i.i.d. and obey the canonical
p.d.f. $f(\theta|p)$.
The hypothesis (ii) for Theorem \ref{Theorem 3.} now follows from a general result, according to
which the convergence in the  LLN for bounded i.i.d. random variables is always geometric.

Due to condition  (iv), 
${\partial z(\theta;q) \over \partial q}$ and 
${\partial x(\theta;q) \over \partial q}$ are
bounded on the compact set $\overline \Theta
\times \overline U$.
Therefore, also 
$$|n^{-1}Z'(q)| =|n^{-1}\sum\limits_{i=1}^n {\partial z(\theta_i;q) \over \partial q}|$$ and 
$$|n^{-1}X'(q)| =|n^{-1}\sum\limits_{i=1}^n {\partial x(\theta_i;q) \over \partial q}|$$ 
are  bounded by  the same constant on $\overline U.$ Thus the hypothesis (iii) for Theorem \ref{Theorem 3.} is satisfied.  \end{proof}


\textsc{Examples:}
(i) Consider the  survival model of Example (ii) in Section \ref{sub33}, and define the microeconomic random variable 
$\xi_i(p) \dot= \chi_{\{p \cdot e_i < w(p)\}}$ as the indicator of non-survival.

Let  $N(p)$ be the number of
non-surviving agents $i$ at price 
$p$: $$N(p)\ \dot=\ \sum\limits^n_{i=1}\chi_{\{p \cdot e_i < w(p)\}},$$ and let $\hat n(p) \dot=
n^{-1}N(p) $ denote the  proportion of non-surviving agents at price $p$.

The proportion $\hat n(p) \dot= n^{-1}N(p) $ can be interpreted as the {\it empirical  probability} of non-survival at price $p.$ As a sum of i.i.d. random variables the macroeconomic variable $N(p)$ is automatically regular.

Let $N^* \dot= N(p^*) $ and  $\hat n^* \dot= \hat n(p^*) $  denote the {\it equilibrium values} of $N(p)$ and $\hat n(p)$, respectively. Clearly $$E(N^*|p) = n P(p^* \cdot e_i < w(p^*)|p) = n \int\limits_{\{p^* \cdot e_i < w(p^*)\}} f(\theta|p)d\theta.$$

It follows that the conditional LLN  for the variable $N(p)$ obtains the form
$$\lim\limits_{n \to \infty} P (|\hat n^*-\int\limits_{\{p^* \cdot e_i < w(p^*)\}} f(\theta|p)d\theta|  < \varepsilon| \exists p^*:|p^*-p|< \delta) =1.$$

This means that, conditionally on the observation of an equilibrium, the empirical probability of non-survival converges to its canonical probability.

Thus, in accordance with Gibbs Conditioning Principle, at the ideal limit $\varepsilon = 0$ and at an observed equilibrium price, for "most" macroeconomic configurations $\omega$ the proportion of non-surviving agents equals the canonical probability of non-survival.
\medskip

(ii) The conditional LLN implies also the convergence of the whole {\it empirical probability distribution} of the economic characteristics (cf. the ideal gas as the analogy, Section \ref{sub51}).

To this end, let $A  \subset \Theta$ be an arbitrary (Borel) subset of the support $\Theta \subset R^m$ of the microeconomic p.d.f. $f(\theta)$, 
and let $\chi_A(\theta)$   denote the indicator function of $A$. Let   $N_A$ be the  number of agents $i$ with characteristics  $\theta_i \in A:$
$$N_A(\omega)\ \dot=\ \sum\limits^n_{i=1}\chi_A(\theta_i).$$

As a sum of i.i.d. bounded random variables,   $N_A$ is automatically regular. Clearly $$E(N_A|p) = n P(\theta_i \in A|p) = n \int\limits_A f(\theta|p)d\theta.$$ 

Let $\hat n_A \dot= n^{-1}N_A $ denote the proportion  of
agents $i$ with characteristics $\theta_i \in A$. The proportions 
$\hat n_A, \ A \subset \Theta,$ form the {\it empirical probability distribution}  of the economic characteristics $\theta_i$. 

According to the conditional LLN, the empirical probability distribution converges to the canonical probability distribution associated with the observed equilibrium:
$$\lim\limits_{n \to \infty} P (|\hat n_A-\int\limits_A f(\theta|p)d\theta| < \varepsilon| \exists p^*:|p^*-p|< \delta) =1,$$
cf. (\ref{equ514}). Thus, in accordance with Gibbs conditioning principle, at the ideal limit $\varepsilon = 0$ and  at an observed equilibrium price, for "most" macroeconomic 
configurations $\omega$ the proportion of agents having their economic characteristics  in $A$ equals the canonical probability
 of $A$.

 
\bigskip


\section{A short recount}\label{appn}
We have above argued for the relevance of the theory of  large
deviations in the study of the equilibria  of random economies
comprising a large number of participating agents.

In probability theory, by a  large deviation is  meant
the occurrence of a value for a random variable that falls  
 outside the region of validity of the    Central Limit Theorem.
 Large deviations are bound to occur in a large random system if the
 a priori model is  imperfect. 

The  standard  LD theory is concerned with  the probabilities of   large
deviations of  "extensive"  random variables, 
which result from an accumulation of a large number of "micro" random
variables. The standard example is  the sum of  i.i.d.  random variables. Extensivity can also be  temporal, in which case time is regarded as the size parameter.

Theorems of LD type express  the probabilities of large
deviations in an exponential form, where the exponent  is proportional to
the size parameter of the system. The complement of the coefficient of
proportionality is referred to as the  rate function. 
Due to the  thermodynamic analogy, the rate function  is also referred
to as the entropy function. The estimate yielded by the  LD theory is valid also outside the region of validity of the CLT.

The  Gibbs Conditioning Principle is  concerned with the
a posteriori  inference from the observation  of a large deviation, i.e.,
how to take into account such an observation  in a mathematically
legitimate way. According to the GCP,  the  a posteriori probability law
belongs to the  exponential family generated by the random variable
under consideration.
As a mathematical theorem, GCP is a  conditional law of large numbers.


Since the seminal  classical work by H. Cram\'er \cite{cra}), LD theory
has become a major subject in probability theory, and subsequently also
an important tool in stochastic modelling, for example, in  statistics,
information theory, engineering problems (\cite{buc}, \cite{demzei}),
modelling of large  communication networks \cite{shwwei}, risk theory
\cite{mar2}, dynamical macroeconomic phenomena \cite{aok},  calibration
of asset prices \cite{ave}, and in analyzing large portfolio losses
\cite{demdeuduf}.

There is a long history in the search  of an analogy between economics and
thermodynamics, see, e.g., Samuelson \cite{sam}. Inspired by the
ideas of Jaynes (\cite{jaynes1957information}, \cite{jaynes1957informationII}) on the connections between statistical mechanics and
information theory,  Foley (e.g., \cite{smifol})  has
elaborated on this analogy further.


The theory of large deviations   can be regarded as the mathematical
abstraction of the inherent mathematical structure of  statistical
mechanics in that large stochastic systems are understood as
"thermodynamic"  systems. Due to this relationship, the proposed formalism
for stochastic economic equilibrium theory is  conceptually and
structurally similar to the formalism of  statistical mechanics. 
The relationship has been pointed out earlier in \cite{kuu}, a PhD thesis supervised by Nummelin.

Classical thermodynamic systems are physical systems comprising a large number of particles (e.g., \cite{mar1} or
\cite{pliber}).
The   goal   is to describe the macroscopic behaviour of the system in terms of  a few   thermodynamic variables. Standard thermodynamic
variables are, e.g.,  volume, pressure, internal energy,
temperature and entropy. They are said to be  extensive if they are  proportional to the
volume,   and intensive  if they are independent from it. Examples
of extensive variables are  volume, internal energy  and entropy,  whereas pressure and temperature  are intensive.

The relationships between  thermodynamic variables are described by 
thermodynamic equations of state. Thus, e.g.,  the integral form of
the Second Law of thermodynamics relates thermodynamic entropy
to temperature, internal energy and partition function, see formula
(\ref{equ216}).

According to the   paradigm, the  thermodynamic laws are assumed to hold true  universally, i.e., to govern the behaviour of  any
thermodynamic system, and even if the behaviour of the system is not fully  understood mathematically.  

Like a thermodynamic system, a large economic system comprising a large
number of economic agents has many "degrees of freedom", only a few of
which are observable. As in statistical mechanics, one can distinguish between
extensive (proportional to the size of the economy) and intensive
(independent of the size) variables. Examples of extensive variables
are,  e.g., the total demand and supply on some commodities, whereas 
prices are typical intensive variables.

Despite their intensive character, however, as  zeros of the (extensive) random total excess demand function,  random equilibrium prices still  obey the  principles of large deviation theory, see  
\cite{num1}, \cite{num2}, \cite{num3}, \cite{num4}. 
Therefore,  stochastic equilibrium theory is a  potential
area for applying the LD methods.

As in statistical
mechanics, the  "thermodynamic formalism" of stochastic equilibrium
theory  reflects certain  underlying universal principles. Their validity is not restricted to only those  random economies for which the exact mathematical conditions can be verified.
   

 \if 0 
\begin{supplement}
\stitle{Suggestions for future research.}
\sdescription{Short description of Supplement A.} \sdescription{A list of concrete research topics is provided,
with comments on possible solutions and interpretations. } 
\end{supplement}
\fi 

\if 0
As the present author sees it, the purpose of mathematical economics is
not only to study mathematical structures which  already are merited as
being relevant to  economics, but - as importantly - also  theories
which only seem to have such  potential.
The aim
of the present work is to advocate certain recent developments
in probability theory which the  author
believes to be useful to the stochastic equilibrium theory, but of which mathematical
economists may  not  be well aware.

\medskip
As the main thesis of this study we argue that there is a structural
analogy, based on the theory of large deviations,
between the formalisms of statistical mechanics and stochastic
equilibrium economics which is worthwhile to be investigated.

In this paper we focus on the economic  analogs of two basic
thermodynamic principles,
namely, of the second law of thermodynamics and of Gibbs
conditioning principle. The goal is  also to provide economic content to these
analogs. \fi
\if 0
We believe that the formal 
analogy between statistical mechanics and stochastic equilibrium economics would deserve a systematic study. 
Below is a list concrete research topics, with comments on possible solutions and interpretations. 
\medskip

(i) Consider an economic system which comprises several   economic sectors, an economic analog of a thermodynamic system which comprises subsystems. A natural question is whether there is a meaningful analog of the First Law of Thermodynamics, i.e., for the
thermodynamic  temperature equilibrium (cf. \cite{lehnum2}; for a risk-theoretic analog, see \cite{mar2}.)

\medskip

(ii) The  principle of minimum entropy  (viz., the analog of the
thermodynamic principle of  maximum entropy) was 
discussed in Remarks \ref{sub23} (iii) and \ref{sub32}(iv).  We suggest its systematic study and its comparison  to the thermodynamic analog.

\medskip
(iii) Since economic models are bound not to be perfect,
approximations play an important role.
Therefore, it is  desirable to investigate
(second order) approximations  in a systematic way. In analogy with
thermodynamics, such approximations ought to
concern the  stability and  fluctuations 
of the economic equilibrium (cf. \cite{pliber}: Section 1.5).

\medskip
(iv) The dichotomy of  reversibility and  irreversibility plays a central role in thermodynamics. A thermodynamic process is reversible if it is quasistatic, viz., "infinitely slow" and such that its path can be reversed (\cite{pliber}: Section 1.2.1). 
This suggests  conjecturing that  a reversible economic
process  ought to be defined as a (random) dynamic economic process
that is "quasistatic", i.e., such that  the parameters  change so slowly that  the agents' economic
behaviour is based solely on the present state of the
economy and not on their expectations on its future states.

\medskip
(v) The ultimate test for the relevance of any abstract theory is its applicability and explanatory power in the analysis of the  real world.
 Thus, e.g., classical statistical mechanics explains the thermodynamic phenomena
in the physical world, and
the classical theory of general equilibrium forms the basis for the equilibrium econometrics. The proposed 
"thermodynamic formalism" of economic equilibrium is subject to this test as well. Suggested by the results of this paper, we propose the use of the Second Law  in the evaluation of the quality of an econometric model, subject to a posteriori "objective" macroeconomic observations, cf. Remark \ref{sub23} (iii). Similarly, in view of its economic interpretation, the Gibbs Conditioning Principle could possibly work as an updating  tool in the stochastic modelling of economic equilibria.

\medskip
(vi) Finally, a problem of fundamental importance and interest is to make a systematic
comparison between the present LD theoretic approach and the approach
proposed in the works by Foley et al. 
\fi
%
%

\begin{acks}[Acknowledgments]
We are grateful to Dario Gasbarra for useful suggestions on our text.

\end{acks}

\end{document}